\begin{document} 
\newtheorem{prop}{Proposition}[section]
\newtheorem{Def}{Definition}[section] \newtheorem{theorem}{Theorem}[section]
\newtheorem{lemma}{Lemma}[section] \newtheorem{Cor}{Corollary}[section]

\title[Nonlinear Dirac equation]{\bf Local well-posedness for the nonlinear Dirac equation in two space dimensions}
\author[Hartmut Pecher]{
{\bf Hartmut Pecher}\\
Fachbereich Mathematik und Naturwissenschaften\\
Bergische Universit\"at Wuppertal\\
Gau{\ss}str.  20\\
42097 Wuppertal\\
Germany\\
e-mail {\tt pecher@math.uni-wuppertal.de}}
\date{}

\begin{abstract}
The Cauchy problem for the cubic nonlinear Dirac equation in two space dimensions is locally well-posed for data in $H^s$ for $ s > 1/2$. The proof given in spaces of Bourgain-Klainerman-Machedon type relies on the null structure of the nonlinearity as used by d'Ancona-Foschi-Selberg for the Dirac-Klein-Gordon system before and bilinear Strichartz type estimates for the wave equation by Selberg and Foschi-Klainerman.
\end{abstract}
\maketitle
\renewcommand{\thefootnote}{\fnsymbol{footnote}}
\footnotetext{\hspace{-1.5em}{\it 2000 Mathematics Subject Classification:} 
35Q55, 35L70 \\
{\it Key words and phrases:} Dirac equation,  
well-posedness, Fourier restriction norm method}
\normalsize 
\setcounter{section}{0}
\section{Introduction and main results}
Consider the Cauchy problem for the nonlinear Dirac
equation in two space dimensions 
\begin{equation}
\label{1}
i(\partial_t + \alpha \cdot \nabla) \psi + M \beta \psi  =  - \langle \beta \psi,\psi \rangle \beta \psi 
\end{equation}
with initial data
\begin{equation}
\psi(0)  =  \psi_0 \, .
\label{2}
\end{equation}
Here $\psi$ is a two-spinor field, i.e. $\psi : {\mathbb R}^{1+2} \to {\mathbb C}^2$,  
$M 
\in {\mathbb R}$ and $\nabla = (\partial_{x_1} , \partial_{x_2}) $ , $ \alpha \cdot 
\nabla = \alpha^1 \partial_{x_1} + \alpha^2 \partial_{x_2}$ . 
$\alpha^1,\alpha^2, \beta$ are hermitian ($ 2 \times 2$)-matrices satisfying 
$\beta^2 = 
(\alpha^1)^2 = (\alpha^2)^2 = I $ , $ \alpha^j \beta + \beta \alpha^j = 0, $  $ 
\alpha^j \alpha^k + \alpha^k \alpha^j = 2 \delta^{jk} I $ . \\
$\langle \cdot,\cdot \rangle $ denotes the ${\mathbb C}^2$ - scalar product. A 
particular representation is given by \\
$ \alpha^1 = \left( \begin{array}{cc}
0 & 1  \\ 
1 & 0  \end{array} \right)$ 
 , $ \alpha^2 = \left( \begin{array}{cc}
0 & -i  \\
i & 0  \end{array} \right)$ , $ \beta = \left( \begin{array}{cc}
1 & 0  \\
0 & -1  \end{array} \right)$ .

We consider Cauchy data in Sobolev spaces: $\psi_0 \in H^s({\mathbb R}^2) $ .

In quantum field theory the nonlinear Dirac equation is a model of self-interacting Dirac fermions. It was originally formulated in one space dimension known as the Thirring model \cite{T} and in three space dimensions \cite{So}. See also \cite{FLR}, \cite{FFK}, \cite{GN}.

In the case of one space dimension global existence for data in $H^1$ was proven by Delgado \cite{D}. For less regular data Selberg and Tesfahun \cite{ST} showed local wellposedness in $H^s$ for $s>0$, unconditional uniqueness in $C^0([0,T],H^s)$ for $s > 1/4$ and global well-posedness for $s > 1/2$. Recently T.Candy \cite{C} was able to show global well-posedness in $L^2$, which is the critical case with respect to scaling.

In the case of three space dimensions Escobedo and Vega \cite{EV} showed local well-posedness in $H^s$ for $s>1$, which is almost critical with respect to scaling. Moreover they considered more general nonlinearities, too. Global solutions for small data in $H^s$ for $s>1$ were shown to exist by Machihara, Nakanishi and Ozawa \cite{MNO}. Machihara, Nakamura, Nakanishi and Ozawa \cite{MNNO} proved global existence for small data in $H^1$ under some additional regularity assumptions for the angular variables.

In the present paper we now consider the case of two space dimensions where the critical space is $H^{1/2}$. We show local well-posedness in $H^s$ for $ s > 1/2$, which is optimal up to the endpoint, and unconditional uniqueness for $s > 3/4$. We construct the solutions in spaces of Bougain-Klainerman-Machedon type, using that the nonlinearity satisfies a null condition. Our proof uses the approach to the corresponding problem for the Dirac-Klein-Gordon equations by d'Ancona, Foschi and Selberg \cite{AFS},\cite{AFS1}. The crucial estimates for the cubic nonlinearity can then be reduced to bilinear Strichartz type estimates for the wave equation which were given by S. Selberg \cite{S} and D. Foschi and S. Klainerman \cite{FK}.

It is possible to simplify the system (\ref{1}),(\ref{2}) by 
considering the projections onto the one-dimensional eigenspaces of the 
operator 
$-i \alpha \cdot \nabla$ belonging to the eigenvalues $ \pm |\xi|$. These 
projections are given by $\Pi_{\pm}(D)$, where  $ D = 
\frac{\nabla}{i} $ and $\Pi_{\pm}(\xi) = \frac{1}{2}(I 
\pm \frac{\xi}{|\xi|} \cdot \alpha) $. Then $ 
-i\alpha \cdot \nabla = |D| \Pi_+(D) - |D| \Pi_-(D) $ and $ \Pi_{\pm}(\xi) \beta
= \beta \Pi_{\mp}(\xi) $. Defining $ \psi_{\pm} := \Pi_{\pm}(D) \psi$  , the Dirac  equation can be rewritten as
\begin{equation}
\label{4}
(-i \partial_t \pm |D|)\psi_{\pm}  =  -M\beta \psi_{\mp} + \Pi_{\pm}( \langle \beta (\psi_+ + \psi_-), \psi_+ + \psi_- \rangle
\beta (\psi_+ + \psi_-)) 
\end{equation}
The initial condition is transformed into
\begin{equation}
\label{6}
\psi_{\pm}(0) = \Pi_{\pm}(D)\psi_0 \, .
\end{equation}
We consider the integral equations belonging to the 
Cauchy problem (\ref{4}),(\ref{6}):
\begin{align}
\nonumber
\psi_{\pm}(t) & =  e^{\mp it|D|} \psi_{\pm}(0) 
-i \int_0^t e^{\mp i(t-s)|D|} 
\Pi_{\pm}(D)(\langle \beta(\Pi_+(D) \psi_+(s)+\Pi_-(D)\psi_-(s), \\ \label{7}
 &\Pi_+(D)\psi_+(s) + \Pi_-(D)\psi_-(s) \rangle \beta(\Pi_+(D) \psi_+(s) 
 + \Pi_-(D)\psi_-(s)) ds \\ \nonumber
 &+ iM \int_0^t e^{\mp i(t-s)|D|} \beta 
\psi{\mp}(s) ds \, .
\end{align}
We remark that any solution of this system automatically fulfills 
$\Pi_{\pm}(D)\psi_{\pm} = \psi_{\pm}$, because applying $\Pi_{\pm}(D)$ to the 
right hand side of (\ref{7}) gives $\Pi_{\pm}(D)\psi_{\pm}(0) = 
\psi_{\pm}(0)$ 
and the integral terms also remain unchanged, because $\Pi_{\pm}(D)^2 = 
\Pi_{\pm}(D)$ and $\Pi_{\pm}(D) \beta \psi_{\mp}(s) = \beta 
\Pi_{\mp}(D)\psi_{\mp}(s) = \beta\psi_{\mp}(s)$. Thus $\Pi_{\pm}(D)\psi_{\pm}$ 
can be 
replaced by $\psi_{\pm}$, thus the system of integral equations reduces exactly 
to the one belonging to our Cauchy problem (\ref{4}),(\ref{6}).

We use the following function spaces and notation. Let $\, \widehat{} \,$ denote the Fourier 
transform with respect to space  and $\, \tilde{} \,$ and $ \, \check{} \,$ the Fourier 
transform and its inverse, respectively, with respect to space and time simultaneously.
The standard spaces of Bougain-Klainerman-Machedon type belonging 
to 
the half waves are defined by the completion of ${\mathcal S}({\mathbb R} \times {\mathbb 
R^2})$ with respect to
$$ \|f\|_{X^{s,b}_{\pm}} = \|U_{\pm}(-t)f\|_{H^b_t H^s_x} = \| \langle \xi 
\rangle^s \langle \tau \pm |\xi| \rangle^b \tilde{f}(\tau,\xi) \|_{L^2} \, $$
where 
$$ U_{\pm}(t):=e^{\mp it|D|} \quad {\mbox {and}} \quad \|g\|_{H^b_t H^s_x} = \| 
\langle 
\xi \rangle^s \langle \tau \rangle^b \tilde{g}(\xi,\tau)\|_{L^2_{\xi,\tau}} \, 
. 
$$
We also define $X^{s,b}_{\pm}[0,T]$ as the space of restrictions of functions in $X^{s,b}_{\pm}$ to the time interval $[0,T]$ with norm
$ \|f\|_{X^{s,b}_{\pm}[0,T]} = \inf_{\tilde{f}_{|[0,T]} =f} \| \tilde{f}\|_{X^{s,b}_{\pm}} \, . $

We use the Strichartz estimates for the homogeneous wave equation in ${\mathbb R^n} 
\times {\mathbb R}$, which can be found e.g. in Ginibre-Velo \cite{GV}, Prop. 2.1.
\begin{prop}
\label{Prop.1.1}
Let $\gamma(r) = (n-1)(\frac{1}{2}-\frac{1}{r})$ , $ \delta(r) = 
n(\frac{1}{2}-\frac{1}{r}) $ , $ n \ge 2 $. Let $\rho,\mu \in {\mathbb R}$, $2 \le 
q \le \infty $, $2 \le r < \infty $ satisfy $0 \le \frac{2}{q} \le \min(\gamma(r),1) $ , $ 
(\frac{2}{q},\gamma(r)) \neq (1,1)$, $ \rho + \delta(r) - \frac{1}{q} = \mu $. 
Then
$$ \|e^{\pm it|D|}u_0\|_{L^q({\mathbb R}, \dot{H}^{\rho}_{r}({\mathbb R}^n))} \le c 
\|u_0\|_{\dot{H}^{\mu}({\mathbb R}^n)} \, . $$
\end{prop}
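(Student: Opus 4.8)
Since $|D|^{\rho}$ commutes with $e^{\pm it|D|}$ and maps $\dot H^{\mu}$ isometrically onto $\dot H^{\mu-\rho}$, we may assume $\rho=0$, so that $\mu=\delta(r)-\frac1q$ and the claim reduces to $\bigl\||D|^{-\mu}e^{\pm it|D|}u_0\bigr\|_{L^q_tL^r_x}\lesssim\|u_0\|_{L^2}$ (the two signs being equivalent by time reversal). Decomposing $u_0=\sum_kP_ku_0$ into Littlewood--Paley pieces and combining the square function estimate with Minkowski's integral inequality $\bigl\|(\sum_k|f_k|^2)^{1/2}\bigr\|_{L^r_x}\le(\sum_k\|f_k\|_{L^r_x}^2)^{1/2}$, valid since $2\le r<\infty$, it suffices to prove the estimate for a single dyadic block; and by the scaling $u_0\mapsto u_0(\lambda\,\cdot)$, under which both sides transform in the same way precisely because $\mu=\rho+\delta(r)-\frac1q$, it is enough to treat data $u_0$ with $\widehat{u_0}$ supported in $\{|\xi|\sim1\}$, with a constant independent of the block; there the multiplier $|D|^{-\mu}$ is smooth and bounded and may be dropped.

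For such localized data put $Tu_0:=e^{it|D|}u_0$. The $TT^{*}$ argument reduces $\|Tu_0\|_{L^q_tL^r_x}\lesssim\|u_0\|_{L^2}$ to
\[
\Bigl\|\int_{\mathbb R}e^{i(t-s)|D|}P_1F(s)\,ds\Bigr\|_{L^q_tL^r_x}\lesssim\|F\|_{L^{q'}_tL^{r'}_x},
\]
where $P_1$ is a fixed smooth cutoff to $\{|\xi|\sim1\}$. The two ingredients are the conservation law $\|e^{it|D|}\|_{L^2\to L^2}=1$ and the fixed-time dispersive estimate
\[
\bigl\|e^{it|D|}P_1f\bigr\|_{L^\infty}\lesssim(1+|t|)^{-\frac{n-1}{2}}\|f\|_{L^1},
\]
which follows from stationary phase applied to the oscillatory kernel with phase $\xi\mapsto x\cdot\xi+t|\xi|$: on the light cone this phase is stationary, and $\nabla_\xi^2(t|\xi|)$ equals $\frac{t}{|\xi|}$ times the projection onto $\xi^{\perp}$, hence has rank $n-1$ (the radial direction being flat), which yields exactly $\frac{n-1}{2}$ powers of decay. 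Complex interpolation of the two bounds gives, for $2\le r<\infty$,
\[
\bigl\|e^{i(t-s)|D|}P_1\bigr\|_{L^{r'}\to L^r}\lesssim(1+|t-s|)^{-\gamma(r)},\qquad\gamma(r)=(n-1)\Bigl(\tfrac12-\tfrac1r\Bigr).
\]

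Inserting this kernel bound into the retarded estimate and applying Minkowski in $x$, the left-hand side is dominated by $\bigl\|(1+|\cdot|)^{-\gamma(r)}*_t\|F(\cdot)\|_{L^{r'}_x}\bigr\|_{L^q_t}$. Ordinary Young's inequality, using $(1+|\cdot|)^{-\gamma(r)}\in L^{q/2}_t$ when $\gamma(r)>\frac2q$, and the weak Young (Hardy--Littlewood--Sobolev) inequality, using $(1+|\cdot|)^{-\gamma(r)}\in L^{q/2,\infty}_t$ when $\gamma(r)=\frac2q$ and $q>2$, close the estimate throughout the stated range $0\le\frac2q\le\min(\gamma(r),1)$; the sole failure is the pair $(\frac2q,\gamma(r))=(1,1)$, i.e.\ $q=2$ and $\gamma(r)=1$, where the required convolution bound $L^2_t*L^{1,\infty}_t\to L^2_t$ is false, and this is precisely the genuine Keel--Tao endpoint that is excluded by hypothesis. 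Summing the dyadic pieces as in the first paragraph completes the proof. The one genuinely delicate point is the dispersive estimate: the single flat direction of the characteristic cone limits the decay to $\frac{n-1}{2}$ powers, and this rate is exactly what fixes the admissible range of $(q,r)$; once it is available, the time integration is a routine convolution estimate away from the endpoint.
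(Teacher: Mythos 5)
Your proof is correct and is essentially the standard non-endpoint Strichartz argument (Littlewood--Paley and scaling reduction to a unit frequency block, $TT^{*}$, the rank-$(n-1)$ stationary-phase dispersive estimate, interpolation, then Young/Hardy--Littlewood--Sobolev in time), which is exactly the scheme of the reference (Ginibre--Velo) that the paper cites in lieu of a proof. The only remark worth adding is that at the excluded pair $(\frac{2}{q},\gamma(r))=(1,1)$ it is merely the convolution step that fails, not the estimate itself (Keel--Tao), but since the hypothesis excludes that point your argument is complete as stated.
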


Fundamental for our results are the following bilinear Strichartz type estimates, which we state for the two-dimensional case.
\begin{prop}
\label{Prop.1.2}
With the notation $(|D|^{\alpha} f)\widehat{}(\xi) = |\xi|^{\alpha} \widehat{f}(\xi)$ and $(D_-^{\alpha} F)\tilde{}(\tau,\xi) = ||\tau|-|\xi||^{\alpha} \tilde{F}(\tau,\xi)$ the following estimate holds for independent signs $\pm$ and $\pm_1$:
$$ \| |D|^{\beta_0} D_-^{\beta_-}(e^{\pm it|D|} u_0 \,e^{\pm_1 it|D|} v_0)\|_{L^2(\mathbb{R} \times \mathbb{R}^2)} \lesssim \|u_0\|_{\dot{H}^{\alpha_1}(\mathbb{R}^2)} \|v_0\|_{\dot{H}^{\alpha_2}(\mathbb{R}^2)} $$
if and only if the following conditions are satisfied:\\
$ \beta_0 + \beta_- = \alpha_1 + \alpha_2 - \frac{1}{2}$ , $\beta_- \ge \frac{1}{4}$ , $ \beta_0 > -\frac{1}{2}$ , $\alpha_i \le \beta_- + \frac{1}{2}   \, (i=1,2)$ , $ \alpha_1 + \alpha_2 \ge \frac{1}{2},$  $(\alpha_i,\beta_-) \neq (\frac{3}{4},\frac{1}{4})  \,  (i=1,2)$ , $ (\alpha_1 + \alpha_2,\beta_-) \neq(\frac{1}{2},\frac{1}{4})$ .
\end{prop}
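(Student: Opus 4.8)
The plan is to establish the estimate of Proposition~\ref{Prop.1.2} by reducing it, via duality and a dyadic Littlewood--Paley decomposition, to a family of elementary convolution estimates on thin neighborhoods of the light cone, and then to match the resulting exponents against the stated conditions. First I would write $u_0 = \sum_{\lambda} u_0^\lambda$, $v_0 = \sum_{\mu} v_0^\mu$ with frequency supports $|\xi| \sim \lambda$, $|\xi| \sim \mu$ ($\lambda,\mu$ dyadic), and observe that the product $e^{\pm it|D|} u_0^\lambda \, e^{\pm_1 it|D|} v_0^\mu$ has space-time Fourier support contained in the set where $(\tau,\xi)$ lies within $O(\min(\lambda,\mu))$ of the sum of two cone pieces. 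The output modulation $d_- := ||\tau|-|\xi||$ is then controlled, and after testing against $\widetilde{w}(\tau,\xi)$ with $\|w\|_{L^2}=1$ one is reduced, by Plancherel and Cauchy--Schwarz in the modulation variable, to a bilinear $L^2$ bound of the form $\| e^{\pm it|D|} u_0^\lambda \, e^{\pm_1 it|D|} v_0^\mu \|_{L^2} \lesssim C(\lambda,\mu) \|u_0^\lambda\|_{L^2}\|v_0^\mu\|_{L^2}$, where $C(\lambda,\mu)$ is exactly the constant appearing in the classical bilinear $L^2$ estimates for free waves in two dimensions.

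The core input is the sharp bilinear estimate of Foschi--Klainerman~\cite{FK} (and Selberg~\cite{S}): in $\mathbb{R}^{1+2}$ one has $\| e^{\pm it|D|} P_\lambda u_0 \, e^{\pm_1 it|D|} P_\mu v_0\|_{L^2} \lesssim \min(\lambda,\mu)^{1/2} \big(\tfrac{\min(\lambda,\mu)}{\max(\lambda,\mu)}\big)^{\text{(gain)}} \|P_\lambda u_0\|_{L^2}\|P_\mu v_0\|_{L^2}$, where the precise gain depends on whether the signs are the same (elliptic interaction, additional gain) or opposite (the hyperbolic case, slightly weaker), and on the high-high versus high-low geometry. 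I would split into the cases $\lambda \ll \mu$ (and symmetrically), and $\lambda \sim \mu$; in the high-high case the two cones are transverse unless $\pm = \pm_1$, and the concentration of the output near the cone (measured by $d_-$) has to be exploited to absorb the $D_-^{\beta_-}$ weight, which is where the lower bound $\beta_- \ge 1/4$ and the exclusion of the endpoints $(\alpha_i,\beta_-)=(3/4,1/4)$ and $(\alpha_1+\alpha_2,\beta_-)=(1/2,1/4)$ enter. Summing the dyadic pieces requires the scaling identity $\beta_0+\beta_- = \alpha_1+\alpha_2-\tfrac12$ (homogeneity, forced), the constraint $\beta_0 > -\tfrac12$ to sum over the output frequency, and $\alpha_i \le \beta_- + \tfrac12$ together with $\alpha_1+\alpha_2 \ge \tfrac12$ to sum the two input frequencies without logarithmic divergence; the endpoint exclusions remove exactly the borderline cases where one of these geometric sums diverges logarithmically.

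For the necessity direction I would construct explicit Knapp-type and radial counterexamples: taking $u_0,v_0$ to be characteristic functions of small caps or annuli on the frequency side and computing both sides shows that each inequality in the list is forced, and testing at the excluded $(\alpha_i,\beta_-)$ values produces a logarithmic loss, ruling out those endpoints. The main obstacle I anticipate is the bookkeeping in the opposite-sign high-high case: there the output is not concentrated on a single cone but spread over a paraboloid-like region of thickness $\sim d_-$, and one must carefully track how the $D_-^{\beta_-}$ localization interacts with the transversality of the two cones to recover the sharp constant — this is precisely the geometry handled in~\cite{FK}, so I would lean on their null-form/bilinear computation rather than redo it, and the bulk of the work is reorganizing their output into the $D_-^{\beta_-}$-weighted form stated here and checking the exponent arithmetic against all the listed conditions.
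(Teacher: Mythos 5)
The paper's entire proof of this proposition is the one-line citation of \cite{FK}, Theorem 1.1, of which the statement is a verbatim specialization to $n=2$; since your argument likewise defers to Foschi--Klainerman both for the core bilinear computation and for the necessity direction, it is in substance the same proof by citation. The only caveat is that your surrounding dyadic sketch does not reflect how \cite{FK} actually argue --- they compute the convolution of weighted measures on the cone explicitly, which is what produces the sharp endpoint exclusions and the full ``only if'' direction that a Littlewood--Paley summation with Knapp examples would struggle to recover exactly --- but since you explicitly lean on their computation at the crucial step, this does not affect the validity of the proposal.
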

\begin{proof}
\cite{FK}, Theorem 1.1
\end{proof}
The so-called transfer principle immediately implies
\begin{Cor}
\label{Cor.1.1}
Under the assumptions of the proposition the following estimate holds:
$$ \| |D|^{\beta_0} D_-^{\beta_-} (fg)\|_{L^2({\mathbb R}\times {\mathbb R}^2)} \lesssim \| |D|^{\alpha_1} f\|_{X_{\pm}^{0,\frac{1}{2}+}}\| |D|^{\alpha_2} g\|_{X_{\pm_1}^{0,\frac{1}{2}+}} \, . $$
\end{Cor}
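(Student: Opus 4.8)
The plan is to deduce the estimate from Proposition~\ref{Prop.1.2} by the transfer principle, the only delicate point being the interaction of $D_-^{\beta_-}$ with the time modulations. First I would absorb the spatial derivatives: setting $F:=|D|^{\alpha_1}f$ and $G:=|D|^{\alpha_2}g$, the right-hand side equals $\|F\|_{X^{0,1/2+}_{\pm}}\|G\|_{X^{0,1/2+}_{\pm_1}}$, and since $|D|^{\alpha_1},|D|^{\alpha_2}$ commute with every Fourier multiplier in $(\tau,\xi)$ one may work with $F,G$ in place of $f,g$. For $F\in X^{0,b}_{\pm}$ I would use the Fourier representation in the modulation variable,
$$ F(t)=c\int_{\mathbb R}e^{it\lambda}\,U_{\pm}(t)F_\lambda\,d\lambda,\qquad \widehat{F_\lambda}(\xi)=\widetilde F(\lambda\mp|\xi|,\xi), $$
so that $U_{\pm}(t)F_\lambda$ is a free half wave, $e^{it\lambda}$ is a harmless phase, and $\|F\|_{X^{0,b}_{\pm}}^2=c\int_{\mathbb R}\langle\lambda\rangle^{2b}\|F_\lambda\|_{L^2}^2\,d\lambda$; likewise $G(t)=c\int e^{it\mu}U_{\pm_1}(t)G_\mu\,d\mu$. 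Substituting these into $FG$, applying $|D|^{\beta_0}D_-^{\beta_-}$, and using Minkowski's inequality reduces everything to the pointwise-in-$(\lambda,\mu)$ bound
$$ \bigl\|\,|D|^{\beta_0}D_-^{\beta_-}\bigl(e^{it(\lambda+\mu)}(U_{\pm}(t)F_\lambda)(U_{\pm_1}(t)G_\mu)\bigr)\bigr\|_{L^2(\mathbb R\times\mathbb R^2)}\ \lesssim\ \langle\lambda\rangle^{0+}\langle\mu\rangle^{0+}\|F_\lambda\|_{L^2}\|G_\mu\|_{L^2}, $$
after which the $\lambda$- and $\mu$-integrations converge by the Cauchy--Schwarz inequality, precisely because $b>1/2$.

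The factors $|D|^{\beta_0}$ and $e^{it(\lambda+\mu)}$ are harmless, so the substance is $D_-^{\beta_-}$, whose symbol $\bigl||\tau|-|\xi|\bigr|^{\beta_-}$ does not commute with the phase. On the Fourier support of the product one has $\tau=\tau_1+\tau_2$, $\xi=\xi_1+\xi_2$ with $\tau_1\pm|\xi_1|=\lambda$ and $\tau_2\pm_1|\xi_2|=\mu$, and the reverse triangle inequality yields
$$ \bigl||\tau|-|\xi|\bigr|\ \lesssim\ |\lambda|+|\mu|+\mathfrak h(\xi_1,\xi_2), $$
where $\mathfrak h(\xi_1,\xi_2)$ is the value of $\bigl||\tau|-|\xi|\bigr|$ on the characteristic surface of the product --- a $\tau$-independent, purely spatial quantity. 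Raising to the power $\beta_-\le1$ and splitting, the piece carrying $\mathfrak h(\xi_1,\xi_2)^{\beta_-}$ is, up to the phase, exactly $|D|^{\beta_0}D_-^{\beta_-}$ applied to the genuine product of the two free half waves $U_{\pm}(t)F_\lambda$, $U_{\pm_1}(t)G_\mu$, so Proposition~\ref{Prop.1.2} applies verbatim and delivers the bound with no $(\lambda,\mu)$-loss at all.

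The remaining two pieces, carrying $|\lambda|^{\beta_-}$ and $|\mu|^{\beta_-}$, are the step I expect to be the real obstacle: a crude bound loses $|\lambda|^{\beta_-}$ with $\beta_-\ge\tfrac14$, which the weight $\langle\lambda\rangle^{2b}$ with $b>1/2$ cannot absorb, and there is no bilinear $L^2_{t,x}$ estimate for a product of two two-dimensional free waves without some positive power of $D_-$. I would handle them by a finer case distinction on the relative sizes of $|\lambda|$, $|\mu|$, and $\mathfrak h(\xi_1,\xi_2)$: in the region where $|\lambda|$ dominates and $\bigl||\tau|-|\xi|\bigr|\approx|\lambda|$ one keeps a full power of $D_-$ and re-enters Proposition~\ref{Prop.1.2} with a reapportioned pair $(\beta_0',\beta_-')$, say $\beta_-'=\tfrac14$ (or else uses the Strichartz bounds of Proposition~\ref{Prop.1.1} after H\"older), the leftover being genuinely lower order; in the complementary region, where $\bigl||\tau|-|\xi|\bigr|\ll|\lambda|$, one uses instead that $D_-^{\beta_-}$ is a true gain offsetting the loss. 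Running this through each of the four sign configurations $(\pm,\pm_1)$, and checking that the open side conditions of Proposition~\ref{Prop.1.2} --- $\beta_0>-\tfrac12$, $\alpha_i\le\beta_-+\tfrac12$, $\alpha_1+\alpha_2\ge\tfrac12$, and the excluded endpoints --- survive the reapportionment, is what will demand the most care; the strict inequality $b>1/2$ and the open conditions on the exponents in the hypotheses are exactly what make the argument close.
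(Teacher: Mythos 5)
Your first two steps are sound, and in fact they are more informative than the paper itself, which offers no argument at all for this corollary beyond the phrase ``the so-called transfer principle immediately implies.'' You are right that the transfer is \emph{not} immediate here: the multiplier $\bigl||\tau|-|\xi|\bigr|^{\beta_-}$ does not commute with the modulations $e^{it\lambda}$ introduced by the foliation; the resulting error terms $|\lambda|^{\beta_-}$ with $\beta_-\ge\frac14$ cannot be absorbed by the weights $\langle\lambda\rangle^{-(\frac12+)}$, since with $b=\frac12+$ one can only absorb a factor $\langle\lambda\rangle^{0+}$; and the weightless bilinear $L^2_{t,x}$ estimate for products of two-dimensional free waves is false (this is exactly the necessity of $\beta_-\ge\frac14$ in Proposition~\ref{Prop.1.2}). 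So your diagnosis of where the difficulty sits is correct.

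The gap is in your final paragraph, which is a sketch rather than a proof, and whose mechanism does not close as described. In the region where $|\lambda|$ dominates you propose to ``re-enter Proposition~\ref{Prop.1.2} with a reapportioned pair $(\beta_0',\beta_-')$,'' but Proposition~\ref{Prop.1.2} is a statement about products of \emph{free} waves: after the foliation the quantity $|\lambda|$ is invisible to it, and reapportioning $\beta_-$ onto the output weight does not produce the factor $|\lambda|^{-\beta_-}$ you need; keeping $\beta_-'=\frac14$ on the output and converting only the excess still leaves a loss $|\lambda|^{\beta_--\frac14}$, which for $\beta_-=\frac12$ is $|\lambda|^{\frac14}$ and again cannot be absorbed. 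Strichartz-after-H\"older is also unavailable in the generality needed, since $L^4_{t,x}$ is not an admissible wave exponent in two dimensions. In the complementary region $\bigl||\tau|-|\xi|\bigr|\ll|\lambda|$ the smallness of the output weight helps the left-hand side, but one still must produce a bilinear $L^2$ bound for an input with large modulation, which is not a free wave. What is actually required is a dyadic decomposition in all three modulations together with a separate estimate in the regime where an input modulation $L_1$ dominates, of the schematic form $\||D|^{\beta_0}(u_{L_1}v_{L_2})\|_{L^2}\lesssim L_1^{\frac12-\beta_-}L_2^{\frac12}\||D|^{\alpha_1}u_{L_1}\|_{L^2}\||D|^{\alpha_2}v_{L_2}\|_{L^2}$, proved e.g.\ by duality (pairing against a third function near a cone and applying Proposition~\ref{Prop.1.2} to the other two factors) or by Sobolev and energy estimates. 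This is the ``hyperbolic Leibniz rule'' analysis carried out in \cite{AFS} and \cite{AFS1}; the corollary is true and standard, but that additional argument is the actual content of the proof, and neither your sketch nor the paper supplies it.
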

We also need the following improvement for products of the type (+,+) and (--,--):
\begin{prop}
\label{Prop.1.3}
The following estimate holds for equal signs:
$$ \| |D|^{\beta_0} D_-^{\frac{1}{4}}(e^{\pm it|D|} u_0 \, e^{\pm it|D|} v_0\|_{L^2(\mathbb{R} \times \mathbb{R}^2)} \lesssim \|u_0\|_{\dot{H}^{\alpha_1}(\mathbb{R}^2)} \|v_0\|_{\dot{H}^{\alpha_2}(\mathbb{R}^2)} \, ,$$
under the assumptions
$ \beta_0 = \alpha_1 + \alpha_2 - \frac{3}{4}$ , $ \alpha_1,\alpha_2 < \frac{3}{4}$ , $ \alpha_1 + \alpha_2 > \frac{1}{4}$ .
\end{prop}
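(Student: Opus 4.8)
Here is the line of attack I would take.

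\medskip

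The plan is as follows. By Plancherel one may replace $\widehat{u_0},\widehat{v_0}$ by $|\widehat{u_0}|,|\widehat{v_0}|$, so assume $\widehat{u_0},\widehat{v_0}\ge 0$, and by symmetry take the sign $+$. On the space-time Fourier support of $e^{it|D|}u_0\,e^{it|D|}v_0$ one has $\tau=|\xi_1|+|\xi_2|\ge|\xi_1+\xi_2|=|\xi|$, where $\xi_1,\xi_2$ are the frequencies of $u_0,v_0$ and $\xi=\xi_1+\xi_2$; thus $D_-^{1/4}$ acts as the positive multiplier $(|\xi_1|+|\xi_2|-|\xi|)^{1/4}$. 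If $\alpha_1+\alpha_2\ge 3/4$ then $\beta_0\ge 0$ and the asserted estimate is already a special case of Proposition \ref{Prop.1.2} (with $\beta_-=1/4$ and the given $\alpha_i$), so from now on I would assume $1/4<\alpha_1+\alpha_2<3/4$, i.e. $-1/2<\beta_0<0$, and split the estimate according to whether $|\xi|\gtrsim\max(|\xi_1|,|\xi_2|)$ ("high output") or $|\xi|\ll\max(|\xi_1|,|\xi_2|)$ ("low output").

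\medskip

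For the high-output region I would use that on $\{|\xi|\gtrsim|\xi_1|\ge|\xi_2|\}$, since $\beta_0<0$, one has $|\xi|^{\beta_0}\lesssim|\xi_1|^a|\xi_2|^b$ for any exponents with $a+b=\beta_0$ and $a,b\le 0$. Splitting symmetrically according to which of $|\xi_1|,|\xi_2|$ is larger, moving the multipliers $|D|^a,|D|^b$ onto the data (the sharp cutoff causes no harm once the Fourier transforms are nonnegative), and choosing $a,b\le 0$ so that $\tilde\alpha_1:=\alpha_1-a<3/4$ and $\tilde\alpha_2:=\alpha_2-b<3/4$ (possible because $\alpha_i<3/4$, and automatically $\tilde\alpha_1+\tilde\alpha_2=\alpha_1+\alpha_2-\beta_0=3/4$), the contribution of this region is bounded by
$$\|D_-^{1/4}\big(e^{it|D|}|D|^a u_0\;e^{it|D|}|D|^b v_0\big)\|_{L^2}\lesssim\||D|^a u_0\|_{\dot{H}^{\tilde\alpha_1}}\||D|^b v_0\|_{\dot{H}^{\tilde\alpha_2}}=\|u_0\|_{\dot{H}^{\alpha_1}}\|v_0\|_{\dot{H}^{\alpha_2}},$$
the first inequality being Proposition \ref{Prop.1.2} applied with parameters $\beta_0=0$, $\beta_-=1/4$ (the hypotheses hold since $\tilde\alpha_i<3/4$ and $\tilde\alpha_1+\tilde\alpha_2=3/4$, so none of its excluded endpoints occurs).

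\medskip

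For the low-output region, $|\xi|\ll\max(|\xi_1|,|\xi_2|)$ forces $|\xi_1|\sim|\xi_2|=:\Lambda$, hence $|\xi_1|+|\xi_2|-|\xi|\sim\Lambda$ and $D_-^{1/4}$ contributes only $\Lambda^{1/4}$; moreover the two inputs must be almost antipodal, which is where the equal-sign hypothesis is essential. Decomposing $u_0,v_0$ into Littlewood--Paley pieces at frequency $\Lambda$ and the output at frequency $N\ll\Lambda$, the pieces $P_N(e^{it|D|}P_\Lambda u_0\,e^{it|D|}P_\Lambda v_0)$ are pairwise orthogonal in $L^2$ (disjoint output frequencies, and for fixed $N$ disjoint $\tau$-supports $\sim 2\Lambda$), so the square of this contribution equals, up to constants,
$$\sum_{N}\sum_{\Lambda\gg N}\Big(N^{\beta_0}\Lambda^{1/4}\,\|P_N(e^{it|D|}P_\Lambda u_0\,e^{it|D|}P_\Lambda v_0)\|_{L^2}\Big)^2.$$
Feeding in the bilinear estimate $\|P_N(e^{it|D|}f\,e^{it|D|}g)\|_{L^2}\lesssim N^{1/2}\|f\|_{L^2}\|g\|_{L^2}$, valid for equal signs with $\widehat f,\widehat g$ supported in $|\xi|\sim\Lambda\gg N$, the $N$-sum becomes $\sum_{N\ll\Lambda}N^{2\beta_0+1}\sim\Lambda^{2\beta_0+1}$ since $2\beta_0+1=2(\alpha_1+\alpha_2)-1/2>0$ — and this is precisely where $\alpha_1+\alpha_2>1/4$ is used — leaving $\sum_\Lambda\Lambda^{2(\alpha_1+\alpha_2)}\|P_\Lambda u_0\|_{L^2}^2\|P_\Lambda v_0\|_{L^2}^2\lesssim\|u_0\|_{\dot{H}^{\alpha_1}}^2\|v_0\|_{\dot{H}^{\alpha_2}}^2$.

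\medskip

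It remains to establish that bilinear estimate, which I would prove by tiling the annulus $|\xi|\sim\Lambda$ into almost-disjoint squares $Q$ of side $\sim N$: for $P_N(e^{it|D|}f_Q\,e^{it|D|}g_{Q'})$ to be nonzero, $Q$ and $-Q'$ must lie within $O(N)$ of each other, so each $Q$ has only $O(1)$ partners; on such a pair one linearises $|\xi_i|$ over $Q,Q'$, so that after an affine change of variables of Jacobian $\sim 1$ the space-time Fourier transform of $P_N(e^{it|D|}f_Q\,e^{it|D|}g_{Q'})$ becomes a convolution in a single variable, and Young's inequality there yields $\|P_N(e^{it|D|}f_Q\,e^{it|D|}g_{Q'})\|_{L^2}\lesssim N^{1/2}\|f_Q\|_{L^2}\|g_{Q'}\|_{L^2}$; summing over $Q$ by the triangle inequality, Cauchy--Schwarz and the bounded overlap finishes it. The main obstacle is this low-output region: it is genuinely outside the reach of Proposition \ref{Prop.1.2} (whose hypothesis $\alpha_1+\alpha_2>1/2$ is exactly the obstruction being removed), it depends essentially on the antipodal geometry of equal signs, and the power $N^{1/2}$ it produces is sharp — anything weaker would not bring $\alpha_1+\alpha_2$ down to $1/4$.
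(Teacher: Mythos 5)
The paper does not actually prove Proposition \ref{Prop.1.3}: it is quoted from Selberg's thesis (Theorem 6(b)) and Foschi--Klainerman (Theorem 12.1). Your proposal supplies a genuine argument, and it is correct; moreover it is essentially a reconstruction of the proof in those references, which also proceeds by separating the non-resonant regime (where the independent-sign result, here Proposition \ref{Prop.1.2}, already applies) from the $(+,+)$ low-output regime $|\xi|\ll|\xi_1|\sim|\xi_2|$ and treating the latter by dyadic summation over the output frequency. All the quantitative checkpoints in your sketch are right: the transfer of the negative weight $|\xi|^{\beta_0}$ onto the inputs in the high-output region is legitimate because the space-time Fourier transform of the product is a nonnegative measure and because exponents $a,b\le 0$ with $a+b=\beta_0$, $\alpha_i-3/4< a,b$ do exist under the hypotheses $\alpha_i<3/4$, $\beta_0<0$; the resulting parameters $\tilde\alpha_1+\tilde\alpha_2=3/4$, $\tilde\alpha_i<3/4$ avoid every excluded endpoint of Proposition \ref{Prop.1.2}; the almost-orthogonality in $(N,\Lambda)$ via disjoint $\xi$- and $\tau$-supports is valid; and the convergence of $\sum_{N\ll\Lambda}N^{2\beta_0+1}$ is exactly the condition $\alpha_1+\alpha_2>1/4$. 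The one step you state rather than prove carefully is the cap-localized bound $\|e^{it|D|}f_Q\,e^{it|D|}g_{Q'}\|_{L^2}\lesssim N^{1/2}\|f_Q\|_{L^2}\|g_{Q'}\|_{L^2}$; your ``linearize and apply Young'' heuristic is the standard transversality argument, but the cleanest justification is Cauchy--Schwarz on the convolution integral: for fixed $(\tau,\xi)$ the fiber is the arc of the ellipse $|\xi_1|+|\xi-\xi_1|=\tau$ inside $Q$, which has length $\lesssim N$, and in the antipodal regime $|\nabla_{\xi_1}(|\xi_1|+|\xi-\xi_1|)|\sim 1$, so the fiber measure is $\lesssim N$ uniformly in $(\tau,\xi)$ and $\Lambda$. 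With that supplied, your argument is complete, and it has the merit over the paper's bare citation of exhibiting exactly where the equal-sign hypothesis and the threshold $\alpha_1+\alpha_2>1/4$ enter.
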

\begin{proof}
\cite{S}, Theorem 6(b) or \cite{FK}, Theorem 12.1 (see also \cite{AFS1}, formula (15)).
\end{proof}
\begin{Cor}
\label{Cor.1.2}
Under the assumptions of the proposition the following estimate holds:
$$ \| |D|^{\beta_0} D_-^{\frac{1}{4}} (fg)\|_{L^2({\mathbb R}\times {\mathbb R}^2)} \lesssim \| |D|^{\alpha_1} f\|_{X_{\pm}^{0,\frac{1}{2}+}}\| |D|^{\alpha_2} g\|_{X_{\pm}^{0,\frac{1}{2}+}} \, . $$
\end{Cor}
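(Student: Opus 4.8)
The plan is to obtain the corollary from Proposition~\ref{Prop.1.3} by the transfer principle, in the same way that Corollary~\ref{Cor.1.1} follows from Proposition~\ref{Prop.1.2}. I would begin by representing $f$ and $g$ as superpositions of modulated free waves adapted to the sign $\pm$. Writing $\mu := \tau \pm |\xi|$ one has, up to an irrelevant constant,
$$ f(t,x) = \int_{\mathbb{R}} e^{it\mu}\,\bigl(e^{\mp it|D|}f^\mu\bigr)(x)\,d\mu, \qquad \widehat{f^\mu}(\xi) := \tilde f(\mu\mp|\xi|,\xi), $$
so that $\||D|^{\alpha_1}f\|_{X^{0,\frac12+}_\pm} \simeq \bigl\|\langle\mu\rangle^{\frac12+}\,\|f^\mu\|_{\dot H^{\alpha_1}}\bigr\|_{L^2_\mu}$, and analogously for $g$ with a modulation variable $\nu$. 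Multiplying the two formulas,
$$ fg = \int\!\!\int e^{it(\mu+\nu)}\,\bigl(e^{\mp it|D|}f^\mu\bigr)\bigl(e^{\mp it|D|}g^\nu\bigr)\,d\mu\,d\nu. $$

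Now $|D|^{\beta_0}$ is a spatial Fourier multiplier and commutes with the time modulations $e^{it(\mu+\nu)}$; the only delicate point is that $D_-^{\frac14}$ does not, since on the Fourier side modulation shifts $\tau$ by $\mu+\nu$. Using $\bigl|\,|\tau+(\mu+\nu)|-|\xi|\,\bigr| \le \bigl|\,|\tau|-|\xi|\,\bigr| + |\mu| + |\nu|$ together with the subadditivity of $s\mapsto s^{1/4}$ (valid since $\tfrac14\le1$) one gets the pointwise bound
$$ \bigl|\,|\tau+(\mu+\nu)|-|\xi|\,\bigr|^{\frac14} \le \bigl|\,|\tau|-|\xi|\,\bigr|^{\frac14} + |\mu|^{\frac14} + |\nu|^{\frac14}. $$
On the space--time spectrum of the free product $\bigl(e^{\mp it|D|}f^\mu\bigr)\bigl(e^{\mp it|D|}g^\nu\bigr)$, which is supported on $\tau = \mp|\xi_1|\mp|\xi_2|$, the first term on the right reproduces exactly the symbol of $D_-^{\frac14}$ acting on that free product. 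Hence the corresponding contribution to $|D|^{\beta_0}D_-^{\frac14}(fg)$ is controlled, via Proposition~\ref{Prop.1.3} (equal signs), by
$$ \bigl\||D|^{\beta_0}D_-^{\frac14}\bigl(e^{\mp it|D|}f^\mu\cdot e^{\mp it|D|}g^\nu\bigr)\bigr\|_{L^2} \lesssim \|f^\mu\|_{\dot H^{\alpha_1}}\|g^\nu\|_{\dot H^{\alpha_2}}, $$
and inserting this into the double integral, Minkowski's inequality followed by the Cauchy--Schwarz inequality in $(\mu,\nu)$ closes the estimate: the weights $\langle\mu\rangle^{-\frac12-}$, $\langle\nu\rangle^{-\frac12-}$ are square integrable, which is precisely where $b=\tfrac12+$ enters.

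The remaining task, and the one point that is genuinely technical rather than formal, is to absorb the two error terms carrying $|\mu|^{1/4}$ and $|\nu|^{1/4}$. One cannot simply apply Proposition~\ref{Prop.1.3} with $D_-^{1/4}$ removed, because the corresponding $D_-$-free bilinear estimate is false (nearly parallel high-frequency wave packets produce a product concentrated on the light cone). Instead I would decompose dyadically in all the modulations and exploit that the factor $|\mu|^{1/4}$ is active only where $\langle\mu\rangle$ dominates both $\langle\nu\rangle$ and the output weight $\bigl|\,|\tau|-|\xi|\,\bigr|$, hence in particular where $\langle\mu\rangle$ dominates the free resonance $|\xi_1|+|\xi_2|-|\xi_1+\xi_2|$; this confines $\xi_1,\xi_2$ to a narrow angular sector, which, combined with the spare power of $\langle\mu\rangle$ furnished by the modulation weight and a cheap Strichartz bound from Proposition~\ref{Prop.1.1} for the other factor, yields a net negative and summable power of $\langle\mu\rangle$; the $|\nu|^{1/4}$ term is symmetric. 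This kind of argument is by now routine for the transfer of $D_-$-weighted bilinear estimates and is carried out, for the analogous situation, in \cite{AFS},\cite{AFS1}; I expect it to be the bulk of the work, while the main term is, as indicated, just Proposition~\ref{Prop.1.3} transferred.
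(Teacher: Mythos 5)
The paper offers no actual proof of this corollary beyond the implicit remark that, like Corollary \ref{Cor.1.1}, it follows from the free-wave estimate ``by the transfer principle,'' so there is little to compare against; judged on its own terms, your treatment of the main term is correct and is exactly the standard foliation argument: the representation $f=\int e^{it\mu}e^{\mp it|D|}f^\mu\,d\mu$, the observation that $|D|^{\beta_0}$ commutes with modulation while $D_-^{1/4}$ does not, the pointwise bound $\bigl||\tau+(\mu+\nu)|-|\xi|\bigr|^{1/4}\le\bigl||\tau|-|\xi|\bigr|^{1/4}+|\mu|^{1/4}+|\nu|^{1/4}$, and Cauchy--Schwarz in $(\mu,\nu)$ using the square-integrability of $\langle\mu\rangle^{-\frac12-}$. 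You have also correctly diagnosed the one genuinely nontrivial point, which the paper passes over in silence.

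The gap is in the error terms, which you defer, and two quantitative facts indicate that the mechanism you sketch does not close as described. First, in two space dimensions there is \emph{no} unweighted bilinear $L^2$ estimate for products of free waves at all (Proposition \ref{Prop.1.2} lists $\beta_-\ge\frac14$ among the necessary conditions), so $\||D|^{\beta_0}(e^{\mp it|D|}f^\mu\,e^{\mp it|D|}g^\nu)\|_{L^2}$ cannot be bounded by $\|f^\mu\|_{\dot H^{\alpha_1}}\|g^\nu\|_{\dot H^{\alpha_2}}$ for any exponents; the correlation between $|\mu|$ and the free resonance $r=|\xi_1|+|\xi_2|-|\xi_1+\xi_2|$ must be retained throughout. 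Second, even on the region $r\lesssim|\mu|$, decomposing $r\sim R$ dyadically and applying Proposition \ref{Prop.1.3} shell by shell yields only $\||D|^{\beta_0}\chi_{r\sim R}(uv)\|_{L^2}\lesssim R^{-1/4}\|f\|\,\|g\|$, and $\sum_{R\lesssim|\mu|}R^{-1/4}$ diverges as $R\to0$: Proposition \ref{Prop.1.3} by itself gives nothing in the small-resonance regime, which is precisely where the narrow angular sector lives, and a sharper localized estimate gaining a \emph{positive} power of $R$ on thin neighbourhoods of the null cone is required there. In addition, for $|\mu|\gtrsim1$ the modulation weight leaves only $|\mu|^{1/4}\langle\mu\rangle^{-\frac12-}\sim\langle\mu\rangle^{-\frac14-}$, which is not square-integrable, so the angular gain must also be quantitative in $\mu$; a ``cheap Strichartz bound'' from Proposition \ref{Prop.1.1} does not obviously supply this. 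The required dyadic-in-modulation analysis is carried out in \cite{AFS1} and in Selberg's thesis, but it constitutes essentially the whole content of the corollary, so as written your argument establishes only the main term.
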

The main result reads as follows:
\begin{theorem}
\label{Theorem1.1}
The Cauchy problem for the Dirac equation (\ref{1}), 
(\ref{2}) has a unique local solution $\psi$ for data $\psi_0 \in H^s({\mathbb R}^2)$, if $ s > 1/2$. 
More precisely there exists a $T >0$ and a unique solution
$$ \psi \in X^{s,\frac{1}{2}+}_+ [0,T] + X^{s,\frac{1}{2}+}_- [0,T]\, . $$
This solution has the property
$$ \psi \in C^0([0,T],H^s({\mathbb R}^2)) \, . $$
\end{theorem}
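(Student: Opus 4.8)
The plan is to solve the system of integral equations \eqref{7} by a contraction-mapping argument in the space $X_+^{s,1/2+}[0,T] \times X_-^{s,1/2+}[0,T]$ (writing $\psi = \psi_+ + \psi_-$ afterwards). Standard energy and time-localization estimates for the linear propagators $U_\pm(t)$ and the Duhamel operator reduce everything to proving a multilinear estimate for the cubic nonlinearity: for all choices of signs,
$$ \| \Pi_{\pm_0}(D)\bigl( \langle \beta \psi_1, \psi_2 \rangle \beta \psi_3 \bigr) \|_{X^{s,-\frac12+}_{\pm_0}} \lesssim \prod_{j=1}^3 \|\psi_j\|_{X^{s,\frac12+}_{\pm_j}}, $$
together with the corresponding linear (in $\psi_\mp$) term from the mass, which is trivial. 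The point of splitting $\psi$ into $\psi_\pm$ is exactly that the bilinear form $\langle \beta \psi_1, \psi_2\rangle$ carries a null structure: following d'Ancona--Foschi--Selberg \cite{AFS,AFS1}, the symbol of $\langle \beta \Pi_{\pm_1}(D) u, \Pi_{\pm_2}(D)v\rangle$ is controlled by the angle between the spatial frequencies $\xi_1,\xi_2$, i.e. one gains a factor $\angle(\pm_1\xi_1,\pm_2\xi_2)$, and this angle is in turn bounded by the resonance weights $\langle |\tau_j| - |\xi_j|\rangle$ and $\langle \tau_1+\tau_2 \pm_0|\xi_1+\xi_2|\rangle$ via the standard inequality $\angle(\pm_1\xi_1,\pm_2\xi_2) \lesssim (\text{min modulation})^{1/2}/(\text{min frequency})^{1/2}$ type bounds. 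So the first genuine step is to record this null-form estimate for $\langle \beta\psi_1,\psi_2\rangle$, writing the symbol as a sum of terms each of which either has the explicit angular gain or is a lower-order (elliptic) contribution.

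The second step is to feed the null-form gain into the bilinear Strichartz estimates. After decomposing the product $\langle \beta\psi_1,\psi_2\rangle \beta\psi_3$ and using duality to put a fourth function $\psi_4$ in $X^{0,1/2+}$ against the output, one is left with an integral of the form $\int \langle\beta\psi_1,\psi_2\rangle \langle \beta\psi_3,\psi_4\rangle$ (schematically), and one pairs up the four factors into two products of two, applying Corollary \ref{Cor.1.1} (or Corollary \ref{Cor.1.2} when the signs within a pair agree, which gives the stronger $D_-^{1/4}$ version). The angular gain from the null structure compensates exactly the $D_-^{\beta_-}$ weight with $\beta_- \ge 1/4$ that the bilinear estimate demands but that is not present a priori, and one must book-keep the powers of $|D|$ so that the exponents $\beta_0,\beta_-,\alpha_1,\alpha_2$ satisfy the admissibility conditions of Propositions \ref{Prop.1.2} and \ref{Prop.1.3}; the scaling line $\beta_0+\beta_- = \alpha_1+\alpha_2-\tfrac12$ forces $s > 1/2$ and leaves a little room ($s = 1/2+$) exactly matching the excluded endpoints $(\alpha_i,\beta_-)\neq(\tfrac34,\tfrac14)$ etc. One should treat separately the cases according to which of the three frequencies $|\xi_1|,|\xi_2|,|\xi_3|$ is largest, since the Sobolev weight $\langle\xi_0\rangle^s$ has to be distributed onto the inputs, and the worst case is high-high-low interactions resonating to low output frequency.

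The third step is to upgrade the fixed-point solution in $X_+^{s,1/2+}[0,T]+X_-^{s,1/2+}[0,T]$ to persistence of regularity $\psi \in C^0([0,T],H^s)$, which is the routine embedding $X^{s,1/2+}_\pm[0,T] \hookrightarrow C^0([0,T],H^s)$ since $1/2+ > 1/2$, and to establish uniqueness in the iteration space (unconditional uniqueness for $s>3/4$ being a separate, harder matter not needed for Theorem \ref{Theorem1.1}). I expect the main obstacle to be the second step: extracting the null-form gain in a form that is quantitatively compatible with the rigid admissibility conditions of the Foschi--Klainerman bilinear estimates, in particular verifying that every dyadic case closes with a positive power of $T$ to spare and that the various endpoint exclusions are never hit — this is where the restriction $s>1/2$ genuinely enters and where the interplay between the d'Ancona--Foschi--Selberg null-structure bookkeeping and the wave bilinear estimates must be made precise.
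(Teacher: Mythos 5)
Your plan coincides with the paper's proof: Proposition 1.4 plus the contraction mapping principle reduces Theorem 1.1 to exactly the trilinear estimate you state (Proposition 2.1 of the paper), which is then proved by duality against a fourth function (placed in $X^{-s,\frac{1}{2}--}_{\pm}$, not $X^{0,\frac{1}{2}+}$), splitting the resulting quadrilinear form into the two null-form pairs $\langle\beta\Pi_{\pm_1}\psi_1,\Pi_{\pm_2}\psi_2\rangle$ and $\langle\beta\Pi_{\pm_3}\psi_3,\Pi_{\pm}\psi_0\rangle$, converting the angular gains into modulation weights via Lemma 2.2, and closing a long case analysis with the Foschi--Klainerman/Selberg bilinear estimates (Corollaries 1.1 and 1.2) together with Strichartz and Sobolev embeddings. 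Apart from the minor slip in the dual exponent, this is essentially the same approach, and the continuity $\psi\in C^0([0,T],H^s)$ and uniqueness in the iteration space follow exactly as you indicate.
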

We also get the following uniqueness result.
\begin{theorem} \label{Theorem1.2}
The solution of Theorem \ref{Theorem1.1} is (unconditionally) unique in the space $C^0([0,T],H^s({\mathbb R}^2))$, if $ s > 3/4 $.
\end{theorem}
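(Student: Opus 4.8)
Let $\psi$ be the solution provided by Theorem \ref{Theorem1.1} and let $\psi'\in C^0([0,T],H^s({\mathbb R}^2))$ be any solution of (\ref{1}), (\ref{2}) with the same initial data; the claim is that $\psi=\psi'$. The core of the argument is to show that $\psi'$ is \emph{automatically} of Bourgain-Klainerman-Machedon type, albeit at a slightly lower Sobolev regularity, namely $\psi'\in X^{\sigma,\frac12+}_+[0,T]+X^{\sigma,\frac12+}_-[0,T]$ for some $\sigma\in(\frac12,s]$. Granting this, both $\psi'$ and $\psi$ — the latter being, by Theorem \ref{Theorem1.1}, in $X^{s,\frac12+}_+[0,T]+X^{s,\frac12+}_-[0,T]\subset X^{\sigma,\frac12+}_+[0,T]+X^{\sigma,\frac12+}_-[0,T]$ — lie in $X^{\sigma,\frac12+}_+[0,T]+X^{\sigma,\frac12+}_-[0,T]$ and solve (\ref{1}), (\ref{2}); since $\sigma>\frac12$ the uniqueness assertion of Theorem \ref{Theorem1.1} is available at regularity $\sigma$ and gives $\psi=\psi'$ on a short time interval, and a standard open--closed argument on $[0,T]$ — at any time where the two solutions agree their common value lies in $H^\sigma$, so local uniqueness may be reapplied with a time step controlled only by the finite quantity $\|\psi\|_{C^0([0,T],H^\sigma)}$ — extends this to all of $[0,T]$.

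It thus remains to carry out the regularity upgrade for $\psi'$. Extend $\psi'_\pm:=\Pi_\pm(D)\psi'$ to all times by the free flows $U_\pm$, multiply by a time cutoff $\chi\in C_0^\infty({\mathbb R})$ with $\chi\equiv1$ on $[0,T]$, and set $\phi_\pm:=\chi\psi'_\pm$, so that
$$ (-i\partial_t\pm|D|)\phi_\pm \,=\, \chi\,\mathbf{1}_{[0,T]}F_\pm - i\chi'\psi'_\pm \,=:\, G_\pm, \qquad F_\pm := \mp M\beta\psi'_\mp + \Pi_\pm(\langle\beta\psi',\psi'\rangle\beta\psi') . $$
The key point is that $G_\pm$ carries \emph{positive} Sobolev regularity: since $\psi'\in L^\infty_t H^s_x$ with $s>\frac34$, two applications of the two-dimensional product law $H^s\cdot H^s\hookrightarrow H^{2s-1-}$ (which is an algebra when $s\ge1$) place the cubic term in $L^\infty_t H^{3s-2-}_x$ with $3s-2>\frac14$, while the mass term and the term $\chi'\psi'_\pm$ even lie in $L^\infty_t H^s_x$; hence $G_\pm\in L^2_t H^\rho_x$ globally in time for any $\rho<3s-2$ (and $\rho=s$ when $s\ge1$). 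Reading off $\langle\tau\pm|\xi|\rangle\widetilde{\phi_\pm}=\widetilde{G_\pm}$ gives $\phi_\pm\in X^{\rho,1}_\pm$, and since trivially $\phi_\pm\in L^2_t H^s_x=X^{s,0}_\pm$, Hölder's inequality in the variables $(\tau,\xi)$ — i.e.\ interpolation of these weighted $L^2$ spaces — yields, for every $\epsilon>0$,
$$ \|\phi_\pm\|_{X^{\sigma,\frac12+\epsilon}_\pm} \,\lesssim\, \|\phi_\pm\|_{X^{\rho,1}_\pm}^{\frac12+\epsilon}\,\|\phi_\pm\|_{X^{s,0}_\pm}^{\frac12-\epsilon}, \qquad \sigma=\big(\tfrac12+\epsilon\big)\rho+\big(\tfrac12-\epsilon\big)s . $$
Letting $\epsilon\to0$ and $\rho\uparrow 3s-2$ gives $\sigma\to 2s-1>\frac12$, so for $\epsilon$ small enough $\psi'_\pm=\phi_\pm\in X^{\sigma,\frac12+}_\pm$ on $[0,T]$ for some $\sigma>\frac12$; no smallness of $T$ is used, so this holds on all of $[0,T]$.

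The main obstacle is exactly this regularity count: the cubic nonlinearity costs two derivatives, so the right-hand side of the equation only sits in $H^{3s-2}$, and interpolation between $X^{3s-2,1}_\pm$ and $X^{s,0}_\pm$ reaches a modulation exponent above $\frac12$ only at Sobolev level $2s-1$, which exceeds the scaling threshold $\frac12$ precisely when $s>\frac34$ — and this is where the hypothesis enters. The remaining points are routine: the time extension and cutoff produce only the harmless lower-order contribution $\chi'\psi'_\pm$; the two-dimensional product estimates (with the indicated $\epsilon$-losses at the endpoints) are classical; and one simply invokes the uniqueness part of Theorem \ref{Theorem1.1} at the regularity $\sigma$ furnished by the upgrade, so that, in particular, no appeal to the null structure of the nonlinearity is needed beyond what already enters the proof of Theorem \ref{Theorem1.1}.
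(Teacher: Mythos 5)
Your proposal is correct and follows essentially the same route as the paper: upgrade the $C^0([0,T],H^s)$ solution to $X^{\rho,1}_\pm$ with $\rho$ slightly below $3s-2=\tfrac14+$ via the linear (energy) estimate and a bilinear/trilinear Sobolev product bound, interpolate with $X^{s,0}_\pm$ to land in $X^{\sigma,\frac12+}_\pm$ with $\sigma>\tfrac12$ exactly when $s>\tfrac34$, and then invoke the uniqueness of Theorem \ref{Theorem1.1}. Your cutoff-and-extension formulation, the product law $H^s\cdot H^s\hookrightarrow H^{2s-1-}$ in place of the paper's H\"older--Sobolev computation, and the explicit open--closed continuation step are only cosmetic refinements of the same argument.
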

We use the following well-known linear estimates (cf. e.g. \cite{AFS}, Lemma 5).
\begin{prop}
\label{Prop.1.4}
Let $ 1/2 < b \le 1$ , $ s \in {\mathbb R}$ , $0<T\le 1$ and $0 \le \delta \le 1-b$. The Cauchy problem
$$ (-i\partial_t \pm |D|) \psi_{\pm} = F  \quad , \quad \psi_{\pm}(0) = f $$
for data $F \in X^{s,b-1+\delta}_{\pm}[0,T]$ and $f \in H^s$ has a unique solution $\psi_{\pm} \in X^{s,b}_{\pm}[0,T]$. It fulfills
$$ \|\psi_{\pm}\|_{X^{s,b}_{\pm}[0,T]} \lesssim \|f\|_{H^s} + T^{\delta} \|F\|_{X^{s,b-1+\delta}_{\pm}[0,T]} $$
with an implicit constant independent of $T$.
\end{prop}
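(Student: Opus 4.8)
The plan is to strip off the half-wave propagator and the spatial regularity, reducing everything to a one-dimensional-in-time lemma. Since the Fourier multiplier $\langle D\rangle^s$ commutes with $\partial_t$, with $|D|$, with restriction to $[0,T]$ and with all the time cut-offs introduced below, it suffices to treat $s=0$. Next, writing $v:=U_\pm(-t)\psi_\pm$ one has $\|\psi_\pm\|_{X^{0,b}_\pm[0,T]}=\|v\|_{H^b_tL^2_x([0,T])}$, and because $-i\partial_t U_\pm(t)=\mp|D|U_\pm(t)$ the equation $(-i\partial_t\pm|D|)\psi_\pm=F$ becomes $-i\partial_t v=U_\pm(-t)F=:G$ with $v(0)=f$, where $\|G\|_{H^{b-1+\delta}_tL^2_x([0,T])}=\|F\|_{X^{0,b-1+\delta}_\pm[0,T]}$. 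So, applying the $L^2_x$-valued version of the following scalar statement for a.e.\ fixed $\xi$, the proposition is equivalent to: for $1/2<b\le 1$, $0\le\delta\le 1-b$ and $0<T\le 1$, the solution of $-i\partial_t v=G$ on $[0,T]$ with $v(0)=f$ satisfies $\|v\|_{H^b_t([0,T])}\lesssim|f|+T^{\delta}\|G\|_{H^{b-1+\delta}_t([0,T])}$, with a constant independent of $T$.

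To prove this scalar lemma I would fix an even $\chi\in C_0^\infty(\mathbb{R})$ with $\chi\equiv 1$ on $[-1,1]$ and $\operatorname{supp}\chi\subset[-2,2]$, put $\chi_T(t):=\chi(t/T)$, extend $G$ from $[0,T]$ to all of $\mathbb{R}$ with near-minimal $H^{b-1+\delta}$ norm, and set $v(t):=\chi(t)f+i\,\chi_T(t)\int_0^t G(s)\,ds$, which coincides with the genuine solution on $[0,T]$ (since $\chi_T\equiv 1$ there), so its $H^b_t$ norm controls $\|v\|_{H^b_t([0,T])}$. The term $\chi(t)f$ contributes $\|\chi\|_{H^b}|f|$. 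The Duhamel term is the heart of the matter: one needs $\|\chi_T(\cdot)\int_0^{\cdot} G(s)\,ds\|_{H^b_t}\lesssim T^{\delta}\|G\|_{H^{b-1+\delta}_t}$, uniformly in $T\le 1$. The standard route is a time-frequency splitting of $G$ into the part with $\langle\tau\rangle\gtrsim 1/T$, where $\int_0^t$ may be replaced up to controlled errors by convolution with $1/(i\tau)$, gaining two powers of $\langle\tau\rangle$ and then losing only $\delta\le 1-b$ of them, and the part with $\langle\tau\rangle\lesssim 1/T$, where a Taylor expansion of $t\mapsto\chi_T(t)\int_0^t G$ together with the $O(T)$-localization of the relevant integrals produces the factor $T^{\delta}$. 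This is the classical inhomogeneous linear estimate in Bourgain spaces; alternatively one may quote it in the form used in \cite{AFS}, Lemma 5, or in Ginibre--Ozawa--Velo.

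Given the a priori estimate, existence follows by defining $\psi_\pm$ on $[0,T]$ through $\psi_\pm(t):=\chi(t)U_\pm(t)f-i\,\chi_T(t)\int_0^t U_\pm(t-s)F(s)\,ds$; the constraints $1/2<b\le 1$ and $0\le\delta\le 1-b$ are exactly what keeps all exponents admissible in the scalar lemma, on $[0,T]$ this function solves the equation because $\chi\equiv\chi_T\equiv 1$ there, and the claimed bound is what was just proved. For uniqueness in $X^{s,b}_\pm[0,T]$: since $b>1/2$ one has $X^{s,b}_\pm[0,T]\hookrightarrow C^0([0,T],H^s)$, so any solution is a genuine $H^s$-valued continuous curve solving the ODE $-i\partial_t\psi_\pm=\mp|D|\psi_\pm+F$ and is therefore given by the Duhamel formula relative to the unitary group $U_\pm$; hence the difference of two solutions solves the homogeneous problem with zero data and vanishes.

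The main obstacle is precisely the scalar Duhamel estimate with the sharp power $T^{\delta}$ and a $T$-independent constant — in particular checking that the endpoints $\delta=0$ (no gain) and $b=1$ cause no trouble and that nothing degenerates as $b\downarrow 1/2$. The conjugation, the reduction to $s=0$, and the uniqueness argument are all routine.
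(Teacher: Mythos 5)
The paper offers no proof of this proposition at all, merely citing \cite{AFS}, Lemma 5 (itself a variant of the classical Ginibre--Tsutsumi--Velo linear estimate), and your argument is precisely the standard proof of that cited lemma: reduction to $s=0$, conjugation by the half-wave group $U_{\pm}$, the cut-off/time-frequency-splitting proof of the scalar Duhamel estimate with gain $T^{\delta}$ under $1/2<b\le 1$, $0\le\delta\le 1-b$, and uniqueness via the embedding $X^{s,b}_{\pm}[0,T]\hookrightarrow C^0([0,T],H^s)$. The only slip is cosmetic: convolution with $1/(i\tau)$ on the high-frequency piece gains one power of $\langle\tau\rangle$, not two, which is still exactly enough because the target regularity $b$ satisfies $b\le (b-1+\delta)+1$ and the factor $\langle\tau\rangle^{-\delta}\lesssim T^{\delta}$ there.
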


Finally we use the following notation: $\langle \cdot \rangle = 1 + |\cdot|$. For $a \in {\mathbb R}$ and $\epsilon > 0$ we denote by $a+,a++,a-,a--$ numbers with $a-\epsilon < a-- < a- < a < a+ < a++ <a+\epsilon$.
  
\section{Proof of the Theorems}
\begin{proof}[{\bf Proof of Theorem \ref{Theorem1.1}}]
Using Prop. \ref{Prop.1.4} a standard application of the contraction mapping principle reduces the proof to the estimates for the nonlinearity in the following Proposition \ref{Prop.2.1}.
\end{proof}

\begin{prop}
\label{Prop.2.1}
For any $\epsilon > 0$ the following estimate holds:
$$ \|\Pi_{\pm}(\langle \beta \Pi_{\pm_1} \psi_1,\Pi_{\pm_2} \psi_2 \rangle \beta \Pi_{\pm_3} \psi_3)\|_{X_{\pm}^{\frac{1}{2}+\epsilon,-\frac{1}{2}++}} \lesssim \prod_{i=1}^3 \|\psi_i\|_{X^{\frac{1}{2}+\epsilon,\frac{1}{2}+}_{\pm_i}} \, . $$
Here and in the following $\pm,\pm_1,\pm_2,\pm_3$ denote independent signs.
\end{prop}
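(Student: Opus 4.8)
The plan is to exploit the null structure of the cubic nonlinearity exactly as in the Dirac--Klein--Gordon analysis of d'Ancona--Foschi--Selberg. First I would decompose dyadically in the frequencies of the three inputs and in the modulations $\langle \tau_i \pm_i |\xi_i|\rangle$ as well as in the output modulation $\langle \tau \pm |\xi|\rangle$. The key algebraic input is the standard null-form bound for the projections: for unit-length vectors one has $|\Pi_\pm(\xi)\,\Pi_{\pm_1}(\eta)\,u| \lesssim \theta(\pm\xi,\pm_1\eta)\,|u|$ where $\theta$ denotes the angle between the indicated vectors, and this angle is controlled by the modulations via the law of cosines, $\theta(\pm\xi,\pm_1\eta)^2 \lesssim \frac{|\xi|+|\eta|\mp(\text{sign})|\xi\pm\eta|}{\min(|\xi|,|\eta|)}\lesssim \frac{\max\big(\langle\tau+\sigma\pm|\xi+\eta|\rangle,\langle\tau\pm_{}|\xi|\rangle,\langle\sigma\pm_1|\eta|\rangle\big)}{\min(|\xi|,|\eta|)}$. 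Applied to the inner product $\langle\beta\Pi_{\pm_1}\psi_1,\Pi_{\pm_2}\psi_2\rangle$ (using $\Pi_{\pm_1}(\xi)\beta = \beta\Pi_{\mp_1}(\xi)$) and to the outer product $\Pi_\pm(\,\cdot\,\beta\Pi_{\pm_3}\psi_3)$, this produces two angular gains, each of which can be traded for a power of the smallest among the relevant modulations and an inverse power of a frequency. That is precisely what converts the a priori non-admissible cubic estimate into something that the bilinear Strichartz estimates can absorb.

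Concretely, I would split the argument according to which modulation among the four is the largest. After using the null-form gains to extract, say, a factor $(\text{lowest modulation})^{1/2}/(\text{lowest frequency})^{1/2}$ from each of the two products, I would group the three spinors into a product of two factors, apply Corollary \ref{Cor.1.1} (or Corollary \ref{Cor.1.2} in the $(+,+)$ and $(-,-)$ cases, which is where the improved $D_-^{1/4}$ estimate of Proposition \ref{Prop.1.3} is needed) to two of them, and estimate the third by its $X^{s,1/2+}_{\pm_i}$ norm after a Sobolev/Strichartz embedding. The $D_-^{\beta_-}$ weight appearing in the bilinear estimates is exactly the modulation weight that the null structure hands us, and the conditions $\beta_0+\beta_- = \alpha_1+\alpha_2-\tfrac12$, $\beta_-\ge\tfrac14$, $\alpha_i\le\beta_-+\tfrac12$ of Proposition \ref{Prop.1.2} are what dictate the threshold $s>1/2$: with $\alpha_i = \tfrac12+\epsilon$ one is forced to take $\beta_-$ essentially $\tfrac14$ (hence the need for Proposition \ref{Prop.1.3} in the equal-sign cases, where only $D_-^{1/4}$ is available) and the degenerate endpoint cases $(\alpha_i,\beta_-)=(\tfrac34,\tfrac14)$ and $(\alpha_1+\alpha_2,\beta_-)=(\tfrac12,\tfrac14)$ must be kept away from by the $\epsilon$ room.

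A separate but routine sub-case is when the output modulation $\langle\tau\pm|\xi|\rangle$ is the dominant one: then one does not need a null-form gain on the outer factor at all, since the negative power $-\tfrac12{+}{+}$ on the output norm together with $D_-^{\beta_-}$ already supplies enough room, and the estimate reduces to a product of three Strichartz-type norms, or to one application of Corollary \ref{Cor.1.1} with a cheap $L^\infty_t L^2_x$ or $L^4_{t,x}$ bound on the remaining factor. Similarly, the high-output-frequency regime $|\xi|\sim\max|\xi_i|$ is harmless because the $\langle\xi\rangle^{1/2+\epsilon}$ weight on the left is then dominated by the corresponding weight on one of the inputs, while the genuinely delicate regime is the high-high-to-low interaction $|\xi|\ll|\xi_1|\sim|\xi_2|$ (or with the roles of the indices permuted), where the output Sobolev weight gives nothing and one must rely entirely on the angular gain together with the $\beta_-\ge 1/4$ bilinear estimate.

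The main obstacle I expect is the bookkeeping of the two independent null-form gains together with the three independent signs: one has to check that in \emph{every} configuration of signs $(\pm,\pm_1,\pm_2,\pm_3)$ and \emph{every} ordering of the four modulations, the two angular factors can be simultaneously converted into admissible $D_-^{\beta_-}$ weights distributed over the (at most two) bilinear factors without ever landing on one of the excluded endpoint tuples, and that at least one of the two pairings of the spinors falls into the equal-sign case covered by Corollary \ref{Cor.1.2} whenever the non-equal-sign Corollary \ref{Cor.1.1} is too weak. Once the case distinction is set up carefully this is mechanical, but it is where all the work is; the actual estimates are then immediate consequences of Corollaries \ref{Cor.1.1} and \ref{Cor.1.2} and Proposition \ref{Prop.1.1}.
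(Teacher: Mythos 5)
Your outline follows the same route as the paper: dualize against a fourth spinor $\psi_0$, extract one angular gain from the inner product $\langle\beta\Pi_{\pm_1}\psi_1,\Pi_{\pm_2}\psi_2\rangle$ and one from pairing $\beta\Pi_{\pm_3}\psi_3$ against $\Pi_\pm\psi_0$, convert angles into modulation weights via the law-of-cosines identities, and close with the bilinear estimates of Corollaries \ref{Cor.1.1} and \ref{Cor.1.2} plus Strichartz/Sobolev embeddings, after a case analysis on signs and on which modulation dominates. That is the correct strategy. However, the substance of the proof is precisely the case analysis you defer as ``mechanical,'' and it is not: several configurations only close with specific devices you have not anticipated. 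First, your angle bound conflates the two regimes of Lemma \ref{Lemma2.2}: in the same-sign case $\Theta_+^2\sim|\xi|\rho_+/(|\eta||\eta-\xi|)$ carries the \emph{output} frequency in the numerator, a gain exploited throughout Part I of the paper, whereas the opposite-sign angle $\Theta_-$ has no such gain. The way the paper survives the opposite-sign pairings is to observe that unless $|\xi|\ll|\eta|\sim|\eta-\xi|$ the opposite-sign angle obeys the same bound as the same-sign one, so only the high-high-to-low regime produces genuinely new cases; without this reduction the bookkeeping you describe does not obviously terminate.

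Second, your mechanism for invoking the equal-sign improvement is not the one that works. Corollary \ref{Cor.1.2} is needed exactly once (Part II, Case 3.1), and the $(+,+)$ structure there does not come from choosing a favorable pairing of the $\psi_i$: it comes from the complex conjugation of the dual function, i.e. $|C_-|=|\lambda-\tau-|\eta-\xi||=|\tau-\lambda+|\xi-\eta||$, which converts a $(+,-)$ interaction between $\psi_3$ and $\psi_0$ into a $(+,+)$ one. Your heuristic that ``at least one of the two pairings falls into the equal-sign case'' is false in general and would leave this case open. Third, the claim that $\alpha_i=\tfrac12+\epsilon$ forces $\beta_-\approx\tfrac14$ does not reflect how the bilinear estimates are actually applied: almost every application in the paper uses $\beta_-=\tfrac12$ with strongly unbalanced exponents (e.g. $\alpha_1=0$, $\alpha_2=1$), loading the full weight $\langle\zeta-\xi\rangle^{\frac12+\epsilon}|\zeta-\xi|^{\frac12}$ onto one factor and compensating on the other with $L^p_tL^q_x$ embeddings such as the interpolated bound $\|f\|_{L^{2+}_tL^\infty_x}\lesssim\|f\|_{X_+^{1,\epsilon}}$, which you would also need to supply. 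Until these points are filled in, the proposal is a correct road map rather than a proof.
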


The null structure of the Dirac equation has the following consequences (we here follow closely \cite{AFS} and \cite{AFS1}). 
Denoting
$$ \sigma_{\pm ,\pm 3}(\eta,\zeta) := \Pi_{\pm_3}(\zeta) \beta \Pi_{\pm 
}(\eta) = \beta \Pi_{\mp_3}(\zeta) \Pi_{\pm  }(\eta) \, , $$
we remark that by orthogonality this quantity vanishes if $\pm \eta$ and 
$\pm_3  \zeta$ line up in the same direction whereas in general (cf. 
\cite{AFS1}, Lemma 1):
\begin{lemma}
\label{Lemma2.1}
$$\sigma_{\pm ,\pm 3}(\eta,\zeta) = O(\angle (\pm \eta, \pm_3 \zeta)) \, ,$$ 
where $\angle(\eta,\zeta)$ denotes the angle between the vectors $\eta$ and 
$\zeta$.
\end{lemma}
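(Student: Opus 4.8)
The plan is to reduce the claim to a standard computation for the half-wave projections $\Pi_{\pm}(\xi)=\tfrac12(I\pm\tfrac{\xi}{|\xi|}\cdot\alpha)$. First I would use the anticommutation identity $\Pi_{\pm_3}(\zeta)\beta=\beta\Pi_{\mp_3}(\zeta)$ already recorded in the introduction to write
$$\sigma_{\pm,\pm_3}(\eta,\zeta)=\beta\,\Pi_{\mp_3}(\zeta)\Pi_{\pm}(\eta),$$
so that $\|\sigma_{\pm,\pm_3}(\eta,\zeta)\|=\|\Pi_{\mp_3}(\zeta)\Pi_{\pm}(\eta)\|$ since $\beta$ is unitary. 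Thus it suffices to bound the product of two orthogonal projections onto one-dimensional eigenspaces. The key algebraic fact is that for unit vectors $e,e'\in S^1$ one has
$$\Pi_{\pm}(e)=\tfrac12(I\pm e\cdot\alpha),\qquad \Pi_{\pm}(e)\Pi_{\mp}(e')=\tfrac14\bigl((I\pm e\cdot\alpha)(I\mp e'\cdot\alpha)\bigr),$$
and expanding this using $(e\cdot\alpha)(e'\cdot\alpha)=(e\cdot e')I + i(e\wedge e')\,\gamma$ for a suitable fixed matrix $\gamma$ (coming from $\alpha^1\alpha^2=-\alpha^2\alpha^1$), one finds that the product vanishes exactly when $\pm e$ and $\mp_3 e'$ (equivalently $\pm\eta$ and $\pm_3\zeta$) point in the same direction, i.e. when $e=\mp(\mp_3)e'$. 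So the product is a smooth matrix-valued function of $(e,e')$ on $S^1\times S^1$ that vanishes on the diagonal set where the relevant directions coincide.

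Next I would quantify this vanishing. Writing $e=\pm\eta/|\eta|$ and $e'=\pm_3\zeta/|\zeta|$, the product $\Pi_{\mp_3}(\zeta)\Pi_{\pm}(\eta)$ depends only on these unit vectors and vanishes when $e=e'$; since it is $C^\infty$ (indeed real-analytic) in $(e,e')$ jointly and $S^1$ is compact, the mean value theorem gives
$$\|\Pi_{\mp_3}(\zeta)\Pi_{\pm}(\eta)\|\lesssim |e-e'|\lesssim \angle(\pm\eta,\pm_3\zeta),$$
the last comparison being the elementary fact that chord length and arc length on the circle are comparable for small angles (and the bound is trivially true, with the implicit constant, for all angles up to $\pi$ since the left side is uniformly bounded by $1$). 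This is exactly the asserted estimate $\sigma_{\pm,\pm_3}(\eta,\zeta)=O(\angle(\pm\eta,\pm_3\zeta))$.

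The only point requiring a little care — and the one I would regard as the ``main obstacle'', though it is quite mild — is verifying cleanly that the vanishing is \emph{exactly} to first order and occurs \emph{precisely} on the anti-diagonal dictated by the signs, which amounts to keeping track of the four sign combinations $(\pm,\pm_3)$ correctly when translating between $\Pi_{\pm_3}(\zeta)\beta=\beta\Pi_{\mp_3}(\zeta)$ and back. This bookkeeping is handled once and for all by the identity $\sigma_{\pm,\pm_3}(\eta,\zeta)=\beta\Pi_{\mp_3}(\zeta)\Pi_{\pm}(\eta)$: the product of two one-dimensional projections in $\mathbb{C}^2$ vanishes iff their ranges are orthogonal, and a direct computation of the explicit $2\times2$ matrices shows the ranges of $\Pi_{\pm}(\eta)$ and $\Pi_{\mp_3}(\zeta)$ fail to be orthogonal exactly when $\pm\eta$ and $\pm_3\zeta$ are aligned. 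Everything else is the smoothness-plus-compactness argument above. This is precisely the content of Lemma 1 in \cite{AFS1}, to which one may alternatively simply refer.
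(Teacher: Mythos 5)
Your argument is correct and is essentially the paper's own proof: the paper disposes of Lemma \ref{Lemma2.1} by citing \cite{AFS1}, Lemma 1, and the computation you outline (reduce to $\beta\Pi_{\mp_3}(\zeta)\Pi_{\pm}(\eta)$, expand the product of projections in the unit vectors, observe first-order vanishing when $\pm\eta$ and $\pm_3\zeta$ align, and use smoothness plus the trivial bound away from alignment) is precisely the standard argument behind that reference. The only blemish is a sign slip in your first paragraph, where ``$\pm e$ and $\mp_3 e'$'' should read ``$\pm e$ and $\pm_3 e'$'' (the product $\Pi_{\mp_3}(\zeta)\Pi_{\pm}(\eta)$ vanishes exactly when the ranges of $\Pi_{\pm}(\eta)$ and $\Pi_{\pm_3}(\zeta)$ coincide), which you in any case state correctly in the second paragraph.
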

Consequently we get
\begin{align} \nonumber
\lefteqn{ |\langle \beta \Pi_{\pm_3 }(D)\psi_3 ,\Pi_{\pm}(D)\psi_0  
  \rangle ^{\tilde{}}(\tau,\xi)|} \\ \nonumber
& \le  \int |\langle \beta \Pi_{\pm_3}(\eta)\tilde \psi_3(\lambda,\eta), 
\Pi_{\pm}(\eta - \xi) \tilde{\psi}_0(\lambda - \tau,\eta - \xi)\rangle| d\lambda d\eta 
\\ \label{40}
& =  
\int |\langle \Pi_{\pm}(\eta - \xi) \beta \Pi_{\pm_3}(\eta)\tilde 
{\psi}_3(\lambda,\eta),  \tilde{\psi}_0(\lambda - \tau,\eta - \xi)\rangle| d\lambda 
d\eta\\ \nonumber
& \lesssim   \int \Theta_{\pm,\pm 3} \, |\tilde{\psi}_1(\lambda,\eta)|\, 
|\tilde{\psi}_2(\lambda - \tau, \eta - \xi)| d\lambda d\eta \, ,
\end{align} 
where we denote
$\Theta_{\pm,\pm_3} = \angle(\pm_3 \eta,\pm(\eta - \xi))$ and $\overline{\Theta}_{\pm_1,\pm_2} = \angle(\pm_2\zeta,\pm_1(\zeta - \xi))$.

We also need the following elementary estimates which can be found in 
\cite{AFS}, section 5.1 or \cite{GP}, Lemma 3.2 and Lemma 3.3.
\begin{lemma}
\label{Lemma2.2}
Denoting
$$ A = |\tau| - |\xi| \, , \, B_{\pm} = \lambda \pm |\eta| \, , \, C_{\pm} = 
\lambda - \tau \pm |\eta - \xi| \, , \, \Theta_{\pm} = 
\angle(\eta,\pm(\eta-\xi)) $$
and
$$ \rho_+ = |\xi| - \left| |\eta| - |\eta-\xi| \right| \, , \, \rho_- = 
|\eta|+|\eta-\xi| - |\xi | $$
the following estimates hold:
$$ \Theta_+^2 \sim \frac{|\xi| \rho_+}{|\eta| |\eta - \xi|} \, , \, \Theta_-^2 
\sim \frac{(|\eta|+|\eta-\xi|)\rho_-}{|\eta| |\eta-\xi|} \sim 
\frac{\rho_-}{\min(|\eta|,|\eta - \xi|)} $$ 
as well as
$$ \rho_{\pm} \le 2 \min(|\eta|,|\eta-\xi|) $$
and 
$$ \rho_ {\pm} \le |A| + |B_+| + |C_{\pm}| $$
as well as
$$ \rho_ {\pm} \le |A| + |B_-| + |C_{\mp}| \, .$$
Similarly we define
$$ D_{\pm} = \sigma \pm |\zeta| \, , \, E_{\pm} = \sigma - \tau \pm |\zeta - \xi| \, , \, \overline{\Theta}_{\pm} = \langle (\zeta,\pm(\zeta - \xi)) $$
and
$$ \overline{\rho}_+ = |\xi| - \left| |\zeta| - |\zeta-\xi| \right| \, , \, \overline{\rho}_- = 
|\zeta|+|\zeta-\xi| - |\xi | $$
then the following estimates hold:
$$ \overline{\Theta}_+^2 \sim \frac{|\xi| \overline{\rho}_+}{|\zeta| |\zeta - \xi|} \, , \, \overline{\Theta}_-^2 
\sim \frac{(|\zeta|+|\zeta-\xi|)\overline{\rho}_-}{|\zeta| |\zeta-\xi|} \sim 
\frac{\overline{\rho}_-}{\min(|\eta|,|\eta - \xi|)} $$ 
as well as
$$ \overline{\rho}_{\pm} \le 2 \min(|\zeta|,|\zeta-\xi|) $$
and 
$$\overline{\rho}_ {\pm} \le |A| + |D_+| + |E_{\pm}| $$
as well as
$$ \overline{\rho}_ {\pm} \le |A| + |D_-| + |E_{\mp}| \, .$$
\end{lemma}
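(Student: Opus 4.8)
The proof is entirely elementary, resting only on the law of cosines and the triangle inequality; I would organize it as follows. Throughout set $a:=|\eta|$, $b:=|\eta-\xi|$, $c:=|\xi|$. Since $\xi=\eta-(\eta-\xi)$, the triangle and reverse triangle inequalities give $|a-b|\le c\le a+b$, and hence $c+|a-b|\sim c$ and $a+b+c\sim a+b$. \textbf{Angle estimates.} Put $\theta:=\angle(\eta,\eta-\xi)\in[0,\pi]$, so $\Theta_+=\theta$ and $\Theta_-=\pi-\theta$. The law of cosines $c^2=a^2+b^2-2ab\cos\theta$ for the triangle with side lengths $a,b,c$ yields
$$ 2ab(1-\cos\theta)=c^2-(a-b)^2=\rho_+\,(c+|a-b|) \, , \qquad 2ab(1+\cos\theta)=(a+b)^2-c^2=\rho_-\,(a+b+c) \, . $$
Since $1-\cos\varphi=2\sin^2(\varphi/2)\sim\varphi^2$ on $[0,\pi]$, we have $\Theta_+^2\sim 1-\cos\theta$ and $\Theta_-^2\sim 1-\cos(\pi-\theta)=1+\cos\theta$; combining this with the two identities above and with $c+|a-b|\sim c$, $a+b+c\sim a+b$ gives
$$ \Theta_+^2\sim\frac{c\,\rho_+}{ab}=\frac{|\xi|\,\rho_+}{|\eta|\,|\eta-\xi|} \, , \qquad \Theta_-^2\sim\frac{(a+b)\,\rho_-}{ab}=\rho_-\Big(\frac1a+\frac1b\Big)\sim\frac{\rho_-}{\min(a,b)} \, , $$
as asserted, the barred statements being the same with $\zeta$ in place of $\eta$. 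For $\rho_\pm\le 2\min(a,b)$ I may assume $a\le b$, and then the triangle inequalities give $\rho_+=c-(b-a)\le(a+b)-(b-a)=2a$ and $\rho_-=a+b-c\le a+b-(b-a)=2a$.

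\textbf{Modulation estimates.} These all come from one algebraic identity. The convolution constraint is $\tau=\lambda-(\lambda-\tau)$; for signs $\sigma_1,\sigma_2$ set $B:=B_{\sigma_1}=\lambda+\sigma_1|\eta|$ and $C:=C_{\sigma_2}=(\lambda-\tau)+\sigma_2|\eta-\xi|$, so that subtracting gives
$$ \tau=B-C-\sigma_1|\eta|+\sigma_2|\eta-\xi| \, . $$
Also $|\tau|=|\xi|+A$ by definition of $A$, hence $\mp\tau-|\xi|\le|\tau|-|\xi|=A\le|A|$ for either sign. Now I split into the four sign patterns. If $\sigma_1=\sigma_2$, then $-\sigma_1|\eta|+\sigma_2|\eta-\xi|$ has modulus $\big||\eta|-|\eta-\xi|\big|$, so taking absolute values $|\xi|+A=|\tau|\le|B_{\sigma_1}|+|C_{\sigma_1}|+\big||\eta|-|\eta-\xi|\big|$, i.e. $\rho_+\le|A|+|B_{\sigma_1}|+|C_{\sigma_1}|$, which delivers both $\rho_+\le|A|+|B_+|+|C_+|$ and $\rho_+\le|A|+|B_-|+|C_-|$. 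If $\sigma_1=+$, $\sigma_2=-$, then $|\eta|+|\eta-\xi|=B_+-C_--\tau$, so $\rho_-=B_+-C_--\tau-|\xi|\le|B_+|+|C_-|+(-\tau-|\xi|)\le|A|+|B_+|+|C_-|$; and $\sigma_1=-$, $\sigma_2=+$ gives $|\eta|+|\eta-\xi|=\tau-B_-+C_+$, so $\rho_-=\tau-B_-+C_+-|\xi|\le|A|+|B_-|+|C_+|$. These are precisely $\rho_\pm\le|A|+|B_+|+|C_\pm|$ and $\rho_\pm\le|A|+|B_-|+|C_\mp|$, and the barred inequalities follow by the relabeling $(\lambda,\eta)\mapsto(\sigma,\zeta)$.

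\textbf{Main obstacle.} There is no genuine difficulty here: the whole lemma reduces to the law of cosines, the two triangle inequalities among $\eta$, $\eta-\xi$, $\xi$, and $1-\cos\varphi\sim\varphi^2$ on $[0,\pi]$. The only thing requiring care is the bookkeeping in the modulation part — matching the four sign choices $(\sigma_1,\sigma_2)$ to the four claimed inequalities (the equal-sign cases producing $\rho_+$, the opposite-sign cases $\rho_-$), and taking the absolute value of $\tau$ only in the equal-sign cases (where the reverse triangle inequality is used), while keeping $\tau$ signed and invoking $\mp\tau-|\xi|\le A$ in the opposite-sign cases. Degenerate frequencies ($\eta=0$ or $\xi=\eta$) cause no trouble, the estimates holding trivially or by continuity there.
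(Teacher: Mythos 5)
Your proof is correct and complete: the law-of-cosines identities $2ab(1\mp\cos\theta)=\rho_\pm(\,c+|a-b|\ \text{resp.}\ a+b+c)$ together with the four sign-pattern triangle inequalities for $\tau=B_{\sigma_1}-C_{\sigma_2}-\sigma_1|\eta|+\sigma_2|\eta-\xi|$ give exactly the stated bounds. The paper itself offers no proof but defers to \cite{AFS}, Section 5.1 and \cite{GP}, Lemmas 3.2--3.3, whose arguments are essentially the same elementary computation you carried out, so your route coincides with the intended one.
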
 

\begin{proof}[{\bf Proof of Prop. \ref{Prop.2.1}}]
The claim of the proposition is equivalent to the estimate
$$ |\int \langle \langle \beta \Pi_{\pm_1} \psi_1,\Pi_{\pm_2} \psi_2 \rangle \beta \Pi_{\pm_3} \psi_3, \Pi_{\pm} \psi_0 \rangle dxdt | \lesssim \prod_{i=1}^3 \|\psi_i\|_{X^{\frac{1}{2}+\epsilon,\frac{1}{2}+}_{\pm_i}} \|\psi_0\|_{X_{\pm}^{-\frac{1}{2}-\epsilon,\frac{1}{2}--}} \, . $$
The left hand side equals
$$ \left|\int \langle \beta \Pi_{\pm_1} \psi_1,\Pi_{\pm_2}\psi_2 \rangle\tilde{}\, \langle \beta \Pi_{\pm_3} \psi_3, \Pi{\pm} \psi_0 \rangle\tilde{}\, d\tau d\xi \right|\, .$$
Using (\ref{40}) it is thus sufficient to prove
\begin{align*}
 \left| \int  \Theta_{\pm,\pm_3} \tilde{\psi}_3(\lambda,\eta) 
\tilde{\psi}_0(\lambda-\tau,\eta-\xi)\overline{\Theta}_{\pm_1,\pm_2} \tilde{\psi}_1(\sigma,\zeta) 
\tilde{\psi}_2(\sigma-\tau,\zeta-\xi) d\sigma d\zeta  d\tau d\xi d\eta d\lambda \right| \\
\lesssim \prod_{i=1}^3 \|\psi_i\|_{X^{\frac{1}{2}+\epsilon,\frac{1}{2}+}_{\pm_i}} \|\psi_0\|_{X_{\pm}^{-\frac{1}{2}-\epsilon,\frac{1}{2}--}} \, , 
\end{align*} 
where we assume w.l.o.g. that the Fourier transforms are nonnegative.
 Defining
\begin{align*}
\tilde{F}_j(\lambda,\eta) &:= \langle \eta \rangle^{\frac{1}{2}+\epsilon} \langle \lambda \pm_j |\eta| \rangle^{\frac{1}{2}+} 
\tilde{\psi}_j(\lambda,\eta) \quad (j=1,2,3) \\
\tilde{F_0}(\lambda,\eta) &:= \langle \eta \rangle^{-\frac{1}{2}-\epsilon} \langle \lambda \pm |\eta| 
\rangle^{\frac{1}{2}--} \tilde{\psi}_0(\lambda,\eta) 
\end{align*}
we thus have to show
\begin{align} \nonumber
I & :=  \int  \Theta_{\pm,\pm_3} 
\frac{\tilde{F}_3(\lambda,\eta)}{\langle \eta \rangle^{\frac{1}{2}+\epsilon}
 \langle B_{\pm_3} \rangle ^{\frac{1}{2}+}}
 \frac{\tilde{F}_0(\lambda-\tau,\eta-\xi) \langle \eta - \xi \rangle^{\frac{1}{2}+\epsilon}}{\langle C_{\pm} \rangle 
^{\frac{1}{2}--}} \\ \nonumber
&  \hspace{2cm}\overline{\Theta}_{\pm_1,\pm_2} \frac{\tilde{F}_1(\sigma,\zeta)}{\langle \zeta \rangle^{\frac{1}{2}+\epsilon} \langle D_{\pm_1} \rangle ^{\frac{1}{2}+}}
\frac{\tilde{F}_2(\sigma-\tau,\zeta-\xi)}{\langle \zeta - \xi \rangle^{\frac{1}{2}+\epsilon}\langle E_{\pm_2} \rangle 
^{\frac{1}{2}+}}d\sigma d\zeta d\tau d\xi d\eta d\lambda \\ \label{41}
& \lesssim \prod_{i=0}^3 \|F_i\|_{L^2_{xt}} \, . 
\end{align}  
In order to prove (\ref{41}) let us first of all consider the low frequency case $|\eta - \xi| \le 1$. We simply use $|\Theta_{\pm,\pm_3}|,|\overline{\Theta}_{\pm_1,\pm_2}| \lesssim 1$ and estimate crudely
\begin{align*}
I & \le \| (\frac{\tilde{F}_3(\lambda,\eta)}{\langle \eta \rangle^{\frac{1}{2}+\epsilon} \langle B_{\pm_3}\rangle^{\frac{1}{2}+}})\check{\,\,}\|_{L^6_{xt}}
\|F_0\|_{L^2_{xt}} \| (\frac{\tilde{F}_1(\sigma,\zeta )}{\langle \zeta \rangle^{\frac{1}{2}+\epsilon} \langle D_{\pm_1}\rangle^{\frac{1}{2}+}})\check{\,\,}\|_{L^6_{xt}} \\ &\hspace{2em} \|(\frac{\tilde{F}_2}{\langle \zeta -\xi \rangle^{\frac{1}{2}+\epsilon} \langle E_{\pm_2}\rangle^{\frac{1}{2}+}})\check{\,\,}\|_{L^6_{xt}} \\
& \lesssim \prod_{i=0}^3 \|F_i\|_{L^2_{xt}}
\end{align*}
using Strichartz' estimate $\,\| e^{\pm it|D|} u_0\|_{L^6_{xt}} \lesssim \|u_0\|_{\dot{H}^{\frac{1}{2}}}$ (Prop. \ref{Prop.1.1}).

From now on we assume $|\eta - \xi| \ge 1$. The estimates for $I$ depend on the different signs which have to be considered.\\
{\bf Part I:} We start with the case where all the signs $\pm,\pm_3,\pm_1,\pm_2$ are + -signs. Analogously one can treat all the cases where $\pm$ and $\pm_3$ as well as $\pm_1$ and $\pm_2$ have the same sign.
Besides the trivial bounds $\Theta_{+,+} , \Theta_{+,+} \lesssim 1$  we make in the following repeated use of the following estimates which immediately follow from Lemma \ref{Lemma2.2}:
\begin{align}
\label{45}
\Theta_{+,+} & \lesssim \frac{|\xi|^{\frac{1}{2}}}{|\eta|^{\frac{1}{2}} |\eta - \xi|^{\frac{1}{2}}} (|A|^{\frac{1}{2}} + |B_+|^{\frac{1}{2}} + |C_+|^{\frac{1}{2}} ) \, ,\\ \label{45'}
\overline{\Theta}_{+,+} & \lesssim \frac{|\xi|^{\frac{1}{2}}}{|\zeta|^{\frac{1}{2}} |\zeta - \xi|^{\frac{1}{2}}} (|A|^{\frac{1}{2}} + |D_+|^{\frac{1}{2}} + |E_+|^{\frac{1}{2}}) \, .
\end{align}
In the case $|C_+| \ge |A|,|B_+|$ we also use
\begin{equation}
\label{46}
\Theta_{+,+}  \lesssim \frac{|\xi|^{\frac{1}{2}}}{|\eta|^{\frac{1}{2}} |\eta - \xi|^{\frac{1}{2}}}  |C_+|^{\frac{1}{2}-} \min(|\eta|,|\eta-\xi|)^{0+} \, .
\end{equation}
We consider several cases depending on the relative size of the terms in the right hand sides of (\ref{45}) and (\ref{45'}). We may assume by symmetry in (\ref{41}) that for the rest of the proof we have $|D_{\pm_1}| \ge |E_{\pm_2}|$, which reduces the number of cases. \\
{\bf Case 1:} $|B_+| \ge |A|,|C_+|$ and $|D_+| \ge |A|,|E_+|$. \\
{\bf Case 1.1:} $\langle C_+ \rangle \le |\xi|$.\\
{\bf Case 1.1.1:} $|\xi| \ll |\eta| $ $\Rightarrow$ $|\eta - \xi| \sim |\eta|$.\\
Using $\Theta_{+,+} \lesssim  \Theta_{+,+}^{1-}$ we obtain
\begin{align*}
I & \lesssim \int \frac{\tilde{F}_3(\lambda,\eta)}{\langle \eta \rangle^{\frac{1}{2}+\epsilon}} 
\frac{\tilde{F}_0(\lambda-\tau,\eta - \xi) \langle \eta - \xi \rangle^{\frac{1}{2}+\epsilon}}{\langle C_+ \rangle^{\frac{1}{2}+}} 
\frac{|\xi|^{0+}|\xi|^{\frac{1}{2}-}}{|\eta|^{\frac{1}{2}-} |\eta - \xi|^{\frac{1}{2}-}} \\
& \hspace{2em} \frac{\tilde{F}_1(\sigma,\zeta)}{\langle \zeta\rangle^{\frac{1}{2}+\epsilon}} 
\frac{\tilde{F}_2(\sigma - \tau, \zeta - \xi)}{\langle E_+ \rangle^{\frac{1}{2}+} \langle \zeta - \xi \rangle^{\frac{1}{2}+\epsilon}}
 \frac{|\xi|^{\frac{1}{2}}}{|\zeta|^{\frac{1}{2}} |\zeta - \xi|^{\frac{1}{2}}}
  d\sigma d\zeta d\tau d\xi d\eta d\lambda \, .
\end{align*}
To reduce the number of cases we always assume $|\zeta| \ge |\zeta - \xi|$, because the alternative case can be treated similarly. Thus we have $|\xi| \lesssim |\zeta|$. We obtain the estimate
\begin{align*}
I  &\lesssim \| ( \frac{\tilde{F}_3(\lambda,\eta)}{|\eta|^{1-}})\check{\,\,}\|_{L^2_t L^{\infty -}_x} 
\|( \frac{\tilde{F}_0(\lambda - \tau,\eta - \xi)}{\langle C_+\rangle^{\frac{1}{2}+}})\check{\,\,}\|_{L^{\infty}_t L^2_x}\| 
( \frac{\tilde{F}_1(\sigma,\zeta )}{\langle \zeta \rangle^{\epsilon -}})\check{\,\,}\|_{L^2_t L^{2+}_x} \\ & \hspace{2em}
\| 
 ( \frac{\tilde{F}_2(\sigma - \tau,\zeta -\xi)}{\langle E_+ \rangle^{\frac{1}{2}+} \langle \zeta - \xi \rangle^{\frac{1}{2}+\epsilon} |\zeta - \xi|^{\frac{1}{2}}})\check{\,\,}\|_{L^{\infty}_t L^{\infty }_x} \\
 & \lesssim \prod_{i=0}^3 \|F_i\|_{L^2_{xt}}
\end{align*}
{\bf Case 1.1.2:} $|\xi| \gtrsim |\eta|$ ($\Rightarrow$ $|\eta - \xi| \lesssim |\xi|$).\\
Estimating $\Theta_{+,+} \lesssim \Theta_{+,+}^{1-2\epsilon}$ and using $|\xi| \lesssim |\zeta|$ we obtain
\begin{align*}
I & \lesssim \int \frac{\tilde{F}_3(\lambda,\eta)}{\langle \eta \rangle^{\frac{1}{2}+\epsilon} \langle B_+ \rangle^{\epsilon}} 
\frac{\tilde{F}_0(\lambda-\tau,\eta - \xi) \langle \eta - \xi \rangle^{\frac{1}{2}+\epsilon}}{\langle C_+ \rangle^{\frac{1}{2}-}} 
\frac{|\xi|^{\frac{1}{2}-\epsilon}}{|\eta|^{\frac{1}{2}-\epsilon} |\eta - \xi|^{\frac{1}{2}-\epsilon}} \\
& \hspace{2em} \frac{\tilde{F}_1(\sigma,\zeta)}{\langle \zeta\rangle^{\frac{1}{2}+\epsilon}} 
\frac{\tilde{F}_2(\sigma - \tau, \zeta - \xi)}{\langle E_+ \rangle^{\frac{1}{2}+} \langle \zeta - \xi \rangle^{\frac{1}{2}+\epsilon}}
 \frac{|\xi|^{\frac{1}{2}}}{|\zeta|^{\frac{1}{2}} |\zeta - \xi|^{\frac{1}{2}}}
  d\sigma d\zeta d\tau d\xi d\eta d\lambda \\
& \lesssim \| ( \frac{\tilde{F}_3(\lambda,\eta)}{\langle \eta \rangle^{\frac{1}{2}+\epsilon}|\eta|^{\frac{1}{2}-\epsilon} \langle B_+ \rangle^{\epsilon}})\check{\,\,}\|_{L^{2+}_t L^{\infty}_x} 
\|( \frac{\tilde{F}_0(\lambda - \tau,\eta - \xi)}{\langle C_+\rangle^{\frac{1}{2}-}})\check{\,\,}\|_{L^{\infty -}_t L^2_x}\| F_1\|_{L^2_t L^{2}_x} \\
& \hspace{2em} \| ( \frac{\tilde{F}_2(\sigma - \tau,\zeta -\xi)}{\langle E_+ \rangle^{\frac{1}{2}+} \langle \zeta - \xi \rangle^{\frac{1}{2}+\epsilon} |\zeta - \xi|^{\frac{1}{2}}})\check{\,\,}\|_{L^{\infty}_t L^{\infty }_x}
\\
 & \lesssim \prod_{i=0}^3 \|F_i\|_{L^2_{xt}} \, .
\end{align*}
For the first factor we used the Sobolev estimate $ \|f\|_{L_t^2 L_x^{\infty}} \lesssim   \|f\|_{X_+^{1+,0}}$ and the estimate $\|f\|_{L_t^{4+} L_x^{\infty}} \lesssim \|f\|_{L_t^{4+} H_x^{\epsilon,\infty -}} \lesssim \|f\|_{X_+^{\frac{3}{4}+\epsilon +,\frac{1}{2}+}}$,which follows from Sobolev's embedding and Strichartz' estimate, so that an interpolation gives
\begin{equation}
\label{42}
\|f\|_{L_t^{2+} L_x^{\infty}} \lesssim  \\\|f\|_{X_+^{1,\epsilon}} \, ,
\end{equation}
which gives the desired bound for $|\eta| \ge 1$, whereas the case $|\eta| \le 1$ is easy.\\
{\bf Case 1.2:} $ \langle C_+ \rangle \ge |\xi| $.\\
\begin{align*}
I & \lesssim \int \frac{\tilde{F}_3(\lambda,\eta)}{\langle \eta \rangle^{\frac{1}{2}+\epsilon} \langle B_+ \rangle^{\frac{1}{2}+}} 
\frac{\tilde{F}_0(\lambda-\tau,\eta - \xi) \langle \eta - \xi \rangle^{\frac{1}{2}+\epsilon}}
{|\xi|^{\frac{1}{2}-}} 
\\
& \hspace{2em} \frac{\tilde{F}_1(\sigma,\zeta)}{\langle \zeta\rangle^{\frac{1}{2}+\epsilon}} 
\frac{\tilde{F}_2(\sigma - \tau, \zeta - \xi)}{\langle E_+ \rangle^{\frac{1}{2}+} \langle \zeta - \xi \rangle^{\frac{1}{2}+\epsilon}}
 \frac{|\xi|^{\frac{1}{2}}}{|\zeta|^{\frac{1}{2}} |\zeta - \xi|^{\frac{1}{2}}}
  d\sigma d\zeta d\tau d\xi d\eta d\lambda \, .
\end{align*}
Estimating $\langle \eta - \xi \rangle \le \langle \eta \rangle + \langle \xi \rangle$ we consider two different cases. \\
{\bf Case 1.2.1:} $ |\eta| \ge |\xi|$.\\
In this case we obtain
\begin{align*}
I & \lesssim \| ( \frac{\tilde{F}_3(\lambda,\eta)}{ \langle B_+ \rangle^{\frac{1}{2}+}})\check{\,\,}\|_{L^{\infty}_t L^2_x}
\| F_0\|_{L^2_t L^{2}_x}
\|( \frac{\tilde{F}_1(\sigma,\zeta )}{\langle \zeta \rangle^{\frac{1}{2}+\epsilon -} |\zeta|^{\frac{1}{2}})}\check{\,\,}\|_{L^2_t L^{\infty}_x} \\
& \hspace{2em} \| ( \frac{\tilde{F}_2(\sigma - \tau,\zeta -\xi)}{\langle E_+ \rangle^{\frac{1}{2}+} \langle \zeta - \xi \rangle^{\frac{1}{2}+\epsilon} |\zeta - \xi|^{\frac{1}{2}}})\check{\,\,}\|_{L^{\infty}_t L^{\infty }_x}
\\
 & \lesssim \prod_{i=0}^3 \|F_i\|_{L^2_{xt}} \, .
 \end{align*}
{\bf Case 1.2.2:} $ |\xi| \ge |\eta|$.\\
In this case we obtain
\begin{align*}
I & \lesssim \| ( \frac{\tilde{F}_3(\lambda,\eta)}{\langle \eta \rangle^{\frac{1}{2}+\epsilon} \langle B_+ \rangle^{\frac{1}{2}+}})\check{\,\,}\|_{L^{\infty}_t L^{4+}_x}
\| F_0\|_{L^2_t L^{2}_x}
\|( \frac{\tilde{F}_1(\sigma,\zeta )}{ |\zeta|^{\frac{1}{2}-}})\check{\,\,}\|_{L^2_t L^{4-}_x} \\
& \hspace{2em} \| ( \frac{\tilde{F}_2(\sigma - \tau,\zeta -\xi)}{\langle E_+ \rangle^{\frac{1}{2}+} \langle \zeta - \xi \rangle^{\frac{1}{2}+\epsilon} |\zeta - \xi|^{\frac{1}{2}}})\check{\,\,}\|_{L^{\infty}_t L^{\infty }_x}
\\
 & \lesssim \prod_{i=0}^3 \|F_i\|_{L^2_{xt}} \, .
 \end{align*}
{\bf Case 2:} $|B_+| \ge |A|,|C_+|$ and $|A| \ge |D_+|,|E_+|$. \\
{\bf Case 2.1:} $ |C_+| \le |\xi|$. \\
This case is treated as follows:
\begin{align*}
I & \lesssim \int \frac{\tilde{F}_3(\lambda,\eta)}{|\eta|^{\frac{1}{2}} \langle \eta \rangle^{\frac{1}{2}+\epsilon}} 
\frac{\tilde{F}_0(\lambda-\tau,\eta - \xi) \langle \eta - \xi \rangle^{\epsilon}}
{\langle C_+ \rangle^{\frac{1}{2}+}} |\xi|^{\frac{1}{2}+} 
\\
& \hspace{2em} |A|^{\frac{1}{2}} \frac{\tilde{F}_1(\sigma,\zeta)}{\langle \zeta\rangle^{\frac{1}{2}+\epsilon} \langle D_+ \rangle^{\frac{1}{2}+}} 
\frac{\tilde{F}_2(\sigma - \tau, \zeta - \xi)}{\langle E_+ \rangle^{\frac{1}{2}+} \langle \zeta - \xi \rangle^{\frac{1}{2}+\epsilon}}
 \frac{|\xi|^{\frac{1}{2}}}{|\zeta|^{\frac{1}{2}} |\zeta - \xi|^{\frac{1}{2}}}
  d\sigma d\zeta d\tau d\xi d\eta d\lambda \, .
\end{align*}
Estimating $\langle \eta - \xi \rangle^{\epsilon} \lesssim \langle \eta \rangle^{\epsilon} + \langle \xi \rangle^{\epsilon} $ we consider two different cases. \\
{\bf Case 2.1.1:} $|\eta| \gtrsim |\xi|$.\\
We obtain the bound
\begin{align*}
I & \lesssim \| ( \frac{\tilde{F}_3(\lambda,\eta)}{|\eta|^{\frac{1}{2}} \langle \eta \rangle^{\frac{1}{2}}})\check{\,\,}\|_{L^2_t L^{\infty -}_x}
\|( \frac{\tilde{F}_0(\lambda - \tau,\eta - \xi)}{\langle C_+ \rangle^{\frac{1}{2}+}})\check{\,\,}\|_{L^{\infty}_t L^2_x} \\
& \hspace{2em} \| (\frac{\tilde{F_1}(\sigma,\zeta)}{\langle \zeta \rangle^{\epsilon -} \langle D_+ \rangle^{\frac{1}{2}+}} \frac{\tilde{F}_2(\sigma - \tau,\zeta -\xi)}{\langle E_+ \rangle^{\frac{1}{2}+} \langle \zeta - \xi \rangle^{\frac{1}{2}+\epsilon} |\zeta - \xi|^{\frac{1}{2}}} ||\tau|-|\xi||^{\frac{1}{2}})\check{\,\,}\|_{L^2_t L^{2+}_x}
\\
 & \lesssim \prod_{i=0}^3 \|F_i\|_{L^2_{xt}} \, .
 \end{align*}
For the last factor we used Sobolev's embedding $H_x^{0+} \subset L_x^{2+}$ first and then Corollary \ref{Cor.1.1} with $\beta_0 =0+$ , $\beta_- = \frac{1}{2}$ , $\alpha_1=0+$ , $\alpha_2=1$.\\
{\bf Case 2.1.2:} $|\xi| \gg |\eta|$ ($\Rightarrow$ $|\xi| \sim |\eta - \xi|$). \\
We obtain
\begin{align*}
I & \lesssim \int \frac{\tilde{F}_3(\lambda,\eta)}{\langle \eta \rangle^{\frac{1}{2}+\epsilon} \langle B_+ \rangle^{\epsilon +}} 
\frac{\tilde{F}_0(\lambda-\tau,\eta - \xi) \langle \eta - \xi \rangle^{\frac{1}{2}+\epsilon}}
{\langle C_+ \rangle^{\frac{1}{2}-}} \frac{|\xi|^{\frac{1}{2}}}{|\eta|^{\frac{1}{2}} |\eta-\xi|^{\frac{1}{2}}}
\\
& \hspace{2em}  \frac{\tilde{F}_1(\sigma,\zeta)}{\langle \zeta\rangle^{\frac{1}{2}+\epsilon} \langle D_+ \rangle^{\frac{1}{2}+}} 
\frac{\tilde{F}_2(\sigma - \tau, \zeta - \xi)}{\langle E_+ \rangle^{\frac{1}{2}+} \langle \zeta - \xi \rangle^{\frac{1}{2}+\epsilon}}
 \frac{|\xi|^{\frac{1}{2}}}{|\zeta|^{\frac{1}{2}} |\zeta - \xi|^{\frac{1}{2}}}|A|^{\frac{1}{2}}
  d\sigma d\zeta d\tau d\xi d\eta d\lambda \\
 & \lesssim \| ( \frac{\tilde{F}_3(\lambda,\eta)}{|\eta|^{\frac{1}{2}} \langle \eta \rangle^{\frac{1}{2}+\epsilon}\langle B_+ \rangle^{\epsilon +}})\check{\,\,}\|_{L^{2+}_t L^{\infty}_x}
\|( \frac{\tilde{F}_0(\lambda - \tau,\eta - \xi)}{\langle C_+ \rangle^{\frac{1}{2}-}})\check{\,\,}\|_{L^{\infty -}_t L^2_x} \\
& \hspace{2em} \| (\frac{\tilde{F_1}(\sigma,\zeta)}{ \langle D_+ \rangle^{\frac{1}{2}+}} \frac{\tilde{F}_2(\sigma - \tau,\zeta -\xi)}{\langle E_+ \rangle^{\frac{1}{2}+} \langle \zeta - \xi \rangle^{\frac{1}{2}+\epsilon} |\zeta - \xi|^{\frac{1}{2}}} ||\tau|-|\xi||^{\frac{1}{2}})\check{\,\,}\|_{L^2_t L^2_x}
\\
 & \lesssim \prod_{i=0}^3 \|F_i\|_{L^2_{xt}} \, . 
\end{align*}
Here we used (\ref{42}) for the first factor and Cor. \ref{Cor.1.1} with $\beta_0=0$ , $\beta_- = \frac{1}{2}$ , $ \alpha_1 =0,$  $\alpha_2 = 1$ for the last factor. \\
{\bf Case 2.2:} $|C_+| \ge |\xi|$. \\
We obtain
\begin{align*}
I & \lesssim \int \frac{\tilde{F}_3(\lambda,\eta)}{\langle \eta \rangle^{\frac{1}{2}+\epsilon} \langle B_+ \rangle^{\frac{1}{2} +}} 
\frac{\tilde{F}_0(\lambda-\tau,\eta - \xi) \langle \eta - \xi \rangle^{\frac{1}{2}+\epsilon}}{|\xi|^{\frac{1}{2}-}}
\\
& \hspace{2em}  \frac{\tilde{F}_1(\sigma,\zeta)}{\langle \zeta\rangle^{\frac{1}{2}+\epsilon} \langle D_+ \rangle^{\frac{1}{2}+}} 
\frac{\tilde{F}_2(\sigma - \tau, \zeta - \xi)}{ \langle \zeta - \xi \rangle^{\frac{1}{2}+\epsilon} \langle E_+ \rangle^{\frac{1}{2}+}}
 \frac{|\xi|^{\frac{1}{2}}}{|\zeta|^{\frac{1}{2}} |\zeta - \xi|^{\frac{1}{2}}}|A|^{\frac{1}{2}}
  d\sigma d\zeta d\tau d\xi d\eta d\lambda
\end{align*}
We use our assumption $|\zeta| \ge |\zeta - \xi|$ , so that $|\xi| \lesssim |\zeta|$, and estimate $\langle \eta - \xi \rangle \le \langle \eta \rangle + \langle \xi \rangle$.\\
{\bf Case 2.2.1:} $|\eta| \ge |\xi|$.\\
In this case we obtain
\begin{align*}
I & \lesssim \| ( \frac{\tilde{F}_3(\lambda,\eta)}{\langle B_+ \rangle^{\frac{1}{2} +}})\check{\,\,}\|_{L^{\infty}_t L^2_x}
\|{F}_0\|_{L^2_t L^2_x} \\
& \hspace{2em} \| (\frac{\tilde{F_1}(\sigma,\zeta)}{\langle \zeta \rangle^{\frac{1}{2}+\epsilon -} |\zeta|^{\frac{1}{2}} \langle D_+ \rangle^{\frac{1}{2}+}} \frac{\tilde{F}_2(\sigma - \tau,\zeta -\xi)}{\langle E_+ \rangle^{\frac{1}{2}+} \langle \zeta - \xi \rangle^{\frac{1}{2}+\epsilon} |\zeta - \xi|^{\frac{1}{2}}} |A|^{\frac{1}{2}})\check{\,\,}\|_{L^2_t L^{\infty}_x}
\\
&\lesssim \|F_3\|_{L^2_{xt}} \|F_0\|_{L^2_{xt}} \\
& \hspace{2em} \| ((|\xi|^{1-}+|\xi|^{1+})\frac{\tilde{F_1}}{\langle \zeta \rangle^{\frac{1}{2}+\epsilon -} |\zeta|^{\frac{1}{2}} \langle D_+ \rangle^{\frac{1}{2}+}} \frac{\tilde{F}_2(\sigma - \tau,\zeta -\xi)}{\langle E_+ \rangle^{\frac{1}{2}+} \langle \zeta - \xi \rangle^{\frac{1}{2}+\epsilon} |\zeta - \xi|^{\frac{1}{2}}} |A|^{\frac{1}{2}})\check{\,\,}\|_{L^2_t L^2_x}
\\
\\
&\lesssim \|F_3\|_{L^2_{xt}} \|F_0\|_{L^2_{xt}} \\
& \hspace{2em} \| (\frac{\tilde{F_1}}{\langle \zeta \rangle^{\epsilon -} \langle D_+ \rangle^{\frac{1}{2}+}} \frac{\tilde{F}_2(\sigma - \tau,\zeta -\xi)}{\langle E_+ \rangle^{\frac{1}{2}+} \langle \zeta - \xi \rangle^{\frac{1}{2}+\epsilon} |\zeta - \xi|^{\frac{1}{2}}} ||\tau|-|\xi||^{\frac{1}{2}})\check{\,\,}\|_{L^2_t L^2_x} \,.
\end{align*}
For the last factor we applied Sobolev's embedding and Cor. \ref{Cor.1.1} with $\beta_0 =0$ , $ \beta_- = \frac{1}{2}$ , $\alpha_1 =0$ , $ \alpha_2 =1$ to obtain the bound $\|F_1\|_{L^2_{xt}}\|F_2\|_{L^2_{xt}}$ for it.\\
{\bf Case 2.2.2:} $|\xi| \ge |\eta|$ ($\Rightarrow$ $|\eta-\xi| \lesssim |\xi| \lesssim |\zeta|$).\\
We arrive at
\begin{align*}
I & \lesssim \| ( \frac{\tilde{F}_3(\lambda,\eta)}{\langle \eta \rangle^{\frac{1}{2}+\epsilon} \langle B_+ \rangle^{\frac{1}{2} +}})\check{\,\,}\|_{L^{\infty}_t L^{4++}_x}
\|{F}_0\|_{L^2_t L^2_x} \\
& \hspace{2em} \| (\frac{\tilde{F_1}(\sigma,\zeta)}{ |\zeta|^{\frac{1}{2}-} \langle D_+ \rangle^{\frac{1}{2}+}} \frac{\tilde{F}_2(\sigma - \tau,\zeta -\xi)}{\langle E_+ \rangle^{\frac{1}{2}+} \langle \zeta - \xi \rangle^{\frac{1}{2}+\epsilon} |\zeta - \xi|^{\frac{1}{2}}} |A|^{\frac{1}{2}})\check{\,\,}\|_{L^2_t L^{4--}_x} \, .
\end{align*}
We estimate the first factor using Sobolev by $\|F_3\|_{L^2_{xt}}$ and the last factor by Sobolev and Cor. \ref{Cor.1.1} with $\beta_0 = 0$ , $\beta_- = \frac{1}{2}$ , $ \alpha_1 = 0+$ , $\alpha_2 = 1-$ by
\begin{align*}
&\| (|\xi|^{\frac{1}{2}--}
\frac{\tilde{F_1}(\sigma,\zeta)}{ |\zeta|^{\frac{1}{2}-} \langle D_+ \rangle^{\frac{1}{2}+}} 
\frac{\tilde{F}_2(\sigma - \tau,\zeta -\xi)}{\langle E_+ \rangle^{\frac{1}{2}+} \langle \zeta - \xi \rangle^{\frac{1}{2}+\epsilon} |\zeta - \xi|^{\frac{1}{2}}} |A|^{\frac{1}{2}})\check{\,\,}\|_{L^2_t L^2_x}\\
&\lesssim \| (\frac{\tilde{F_1}(\sigma,\zeta)}{ |\zeta|^{0+} \langle D_+ \rangle^{\frac{1}{2}+}} \frac{\tilde{F}_2(\sigma - \tau,\zeta -\xi)}{\langle E_+ \rangle^{\frac{1}{2}+} \langle \zeta - \xi \rangle^{\frac{1}{2}+\epsilon} |\zeta - \xi|^{\frac{1}{2}}} |A|^{\frac{1}{2}})\check{\,\,}\|_{L^2_t L^2_x}\\
& \lesssim \|F_1\|_{L^2_{xt}} \|F_2\|_{L^2_{xt}} \, .
\end{align*}
{\bf Case 3:} $|A| \ge |B_+|,|C_+|$ and $ |D_+| \ge |A|,|E_+|$. \\
{\bf Case 3.1:} $|C_+| \le |\eta|$.\\
Using $|\xi| \le |\zeta| + |\zeta - \xi| \lesssim |\zeta|$ we obtain
\begin{align*}
I & \lesssim \int \frac{\tilde{F}_3(\lambda,\eta)}{ \langle \eta \rangle^{\frac{1}{2}+\epsilon} \langle B_+ \rangle^{\frac{1}{2}+}} 
\frac{\tilde{F}_0(\lambda-\tau,\eta - \xi) \langle \eta - \xi \rangle^{\frac{1}{2}+\epsilon} |\eta|^{0+}}
{\langle C_+ \rangle^{\frac{1}{2}+}} |A|^{\frac{1}{2}} \frac{|\xi|^{\frac{1}{2}}}{|\eta|^{\frac{1}{2}} |\eta - \xi|^{\frac{1}{2}}} 
\\
& \hspace{2em} \frac{\tilde{F}_1(\sigma,\zeta)}{\langle \zeta\rangle^{\frac{1}{2}+\epsilon}} 
\frac{\tilde{F}_2(\sigma - \tau, \zeta - \xi)}{\langle E_+ \rangle^{\frac{1}{2}+} \langle \zeta - \xi \rangle^{\frac{1}{2}+\epsilon}}
 \frac{|\xi|^{\frac{1}{2}}}{|\zeta|^{\frac{1}{2}} |\zeta - \xi|^{\frac{1}{2}}}
  d\sigma d\zeta d\tau d\xi d\eta d\lambda 
\\
&\lesssim \int \frac{\tilde{F}_3(\lambda,\eta)}{ \langle \eta \rangle^{\frac{1}{2}+\epsilon} |\eta|^{\frac{1}{2}-} \langle B_+ \rangle^{\frac{1}{2}+}} 
\frac{\tilde{F}_0(\lambda-\tau,\eta - \xi) \langle \eta - \xi \rangle^{\epsilon}}
{\langle C_+ \rangle^{\frac{1}{2}+}} |A|^{\frac{1}{2}}
\\
& \hspace{2em} \frac{\tilde{F}_1(\sigma,\zeta)}{\langle \zeta\rangle^{\epsilon}} 
\frac{\tilde{F}_2(\sigma - \tau, \zeta - \xi)}{\langle E_+ \rangle^{\frac{1}{2}+} \langle \zeta - \xi \rangle^{\frac{1}{2}+\epsilon} |\zeta - \xi|^{\frac{1}{2}}}
  d\sigma d\zeta d\tau d\xi d\eta d\lambda  \, .
\end{align*}
Now we have $\langle \eta - \xi \rangle^{\epsilon} \lesssim \langle \eta \rangle^{\epsilon} + \langle \xi \rangle^{\epsilon}$.\\
{\bf Case 3.1.1:} $|\eta| \ge |\xi|$.\\
We arrive at
\begin{align*}
I & \lesssim \int \frac{\tilde{F}_3(\lambda,\eta)}{ \langle \eta \rangle^{\frac{1}{2}} |\eta|^{\frac{1}{2}-} \langle B_+ \rangle^{\frac{1}{2}+}} 
\frac{\tilde{F}_0(\lambda-\tau,\eta - \xi)}
{\langle C_+ \rangle^{\frac{1}{2}+}} |A|^{\frac{1}{2}} |\xi|^{-\epsilon}
\\
& \hspace{2em} |\xi|^{\epsilon} \frac{\tilde{F}_1(\sigma,\zeta)}{\langle \zeta\rangle^{\epsilon}} 
\frac{\tilde{F}_2(\sigma - \tau, \zeta - \xi)}{\langle E_+ \rangle^{\frac{1}{2}+} \langle \zeta - \xi \rangle^{\frac{1}{2}+\epsilon} |\zeta - \xi|^{\frac{1}{2}}}
  d\sigma d\zeta d\tau d\xi d\eta d\lambda \\
 & \lesssim \| ( \frac{\tilde{F}_3(\lambda,\eta)}{\langle \eta \rangle^{\frac{1}{2}} |\eta|^{\frac{1}{2}-} \langle B_+ \rangle^{\frac{1}{2}+}} \frac{\tilde{F}_0(\lambda - \tau,\eta - \xi)}{\langle C_+\rangle^{\frac{1}{2}+}} |A|^{\frac{1}{2}} |\xi|^{-\epsilon})\check{\,\,}\|_{L^2_{xt}} \|F_1\|_{L^2_{xt}} \\
 & \hspace{2em} \|( \frac{\tilde{F}_2(\sigma - \tau,\zeta -\xi)}{\langle E_+ \rangle^{\frac{1}{2}+} \langle \zeta - \xi \rangle^{\frac{1}{2}+\epsilon} |\zeta - \xi|^{\frac{1}{2}}} )\check{\,\,}\|_{L^{\infty}_{xt}} \\
 & \lesssim \prod_{i=0}^3 \|F_i\|_{L^2_{xt}} \, ,
\end{align*}  
where we used Cor. \ref{Cor.1.1} with $\beta_0 = -\epsilon$ , $ \beta_- = \frac{1}{2}$ , $\alpha_1 = 1-\epsilon$ , $\alpha_2 =0$ for the first factor. \\
{\bf Case 3.1.2:} $|\xi| \ge |\eta|$.\\
An application of Cor. \ref{Cor.1.1} with $\beta_0 =0$ , $\beta_- = \frac{1}{2}$ , $\alpha_1=1$ , $\alpha_2 =0$ gives the estimate
\begin{align*}
I & \lesssim \| ( \frac{\tilde{F}_3(\lambda,\eta)}{\langle \eta \rangle^{\frac{1}{2}+\epsilon} |\eta|^{\frac{1}{2}-} \langle B_+ \rangle^{\frac{1}{2}+}} \frac{\tilde{F}_0(\lambda - \tau,\eta - \xi)}{\langle C_+\rangle^{\frac{1}{2}+}} |A|^{\frac{1}{2}} )\check{\,\,}\|_{L^2_{xt}} \|F_1\|_{L^2_{xt}} \\
 & \hspace{2em} \|( \frac{\tilde{F}_2(\sigma - \tau,\zeta -\xi)}{\langle E_+ \rangle^{\frac{1}{2}+} \langle \zeta - \xi \rangle^{\frac{1}{2}+\epsilon} |\zeta - \xi|^{\frac{1}{2}}} )\check{\,\,}\|_{L^{\infty}_{xt}} \\
 & \lesssim \prod_{i=0}^3 \|F_i\|_{L^2_{xt}} \, .
\end{align*}
{\bf Case 3.2:} $|C_+| \ge |\eta|$.\\
We obtain using again our tacid assumption $|\zeta| \ge |\zeta - \xi|$:
\begin{align*}
I & \lesssim \int \frac{\tilde{F}_3(\lambda,\eta)}{ \langle \eta \rangle^{\frac{1}{2}+\epsilon} \langle B_+ \rangle^{\frac{1}{2}+}} 
\frac{\tilde{F}_0(\lambda-\tau,\eta - \xi) \langle \eta - \xi \rangle^{\frac{1}{2}+\epsilon}}
{|\eta|^{\frac{1}{2}-}}
\\
& \hspace{2em}  \frac{\tilde{F}_1(\sigma,\zeta)}{\langle \zeta\rangle^{\frac{1}{2}+\epsilon}} 
\frac{\tilde{F}_2(\sigma - \tau, \zeta - \xi)}{\langle E_+ \rangle^{\frac{1}{2}+} \langle \zeta - \xi \rangle^{\frac{1}{2}+\epsilon} |\zeta - \xi|^{\frac{1}{2}}}
  d\sigma d\zeta d\tau d\xi d\eta d\lambda \, .
\end{align*}
{\bf Case 3.2.1:} $|\eta| \ge |\xi|$.\\
We estimate as follows:
\begin{align*}
I & \lesssim \| ( \frac{\tilde{F}_3(\lambda,\eta)}{ |\eta|^{\frac{1}{2}-} \langle B_+ \rangle^{\frac{1}{2}+}} )\check{\,\,}\|_{L^{\infty}_t L^{4-}_x} \|F_0\|_{L^2_{xt}} \| (\frac{\tilde{F}_1(\sigma,\zeta )}{\langle \zeta \rangle^{\frac{1}{2}+\epsilon}})\check{\,\,}\|_{L^2_t L^{4+}_x} \\
 & \hspace{2em} \|( \frac{\tilde{F}_2(\sigma - \tau,\zeta -\xi)}{\langle E_+ \rangle^{\frac{1}{2}+} \langle \zeta - \xi \rangle^{\frac{1}{2}+\epsilon} |\zeta - \xi|^{\frac{1}{2}}} )\check{\,\,}\|_{L^{\infty}_{xt}} \\
 & \lesssim \prod_{i=0}^3 \|F_i\|_{L^2_{xt}} \, .
\end{align*}
{\bf Case 3.2.2:} $|\eta| \le |\xi|$.\\
In this case we obtain
\begin{align*}
I & \lesssim \| ( \frac{\tilde{F}_3(\lambda,\eta)}{\langle \eta \rangle^{\frac{1}{2}+\epsilon} |\eta|^{\frac{1}{2}-} \langle B_+ \rangle^{\frac{1}{2}+}} )\check{\,\,}\|_{L^{\infty}_{xt}} \|F_0\|_{L^2_{xt}} \|F_1\|_{L^2_{xt}}  \\
 & \hspace{2em} \|( \frac{\tilde{F}_2(\sigma - \tau,\zeta -\xi)}{\langle E_+ \rangle^{\frac{1}{2}+} \langle \zeta - \xi \rangle^{\frac{1}{2}+\epsilon} |\zeta - \xi|^{\frac{1}{2}}} )\check{\,\,}\|_{L^{\infty}_{xt}} \\
 & \lesssim \prod_{i=0}^3 \|F_i\|_{L^2_{xt}} \, .
\end{align*}
{\bf Case 4:} $|C_+| \ge |A|,|B_+|$ and $|D_+| \ge |A|,|E_+|$.\\
Using $ \Theta_{+,+} \lesssim \frac{|\xi|^{\frac{1}{2}} |\eta|^{0+}}{|\eta|^{\frac{1}{2}} |\eta - \xi|^{\frac{1}{2}}} |C_+|^{\frac{1}{2}-}$ and $|\xi| \le |\zeta| + |\zeta - \xi| \lesssim |\zeta|$ we obtain
\begin{align*}
I & \lesssim \int \frac{\tilde{F}_3(\lambda,\eta)}{ \langle \eta \rangle^{\frac{1}{2}+\epsilon} \langle B_+ \rangle^{\frac{1}{2}+}} 
\tilde{F}_0(\lambda-\tau,\eta - \xi) \langle \eta - \xi \rangle^{\frac{1}{2}+\epsilon}
\frac{|\xi|^{\frac{1}{2}} |\eta|^{0+}}{|\eta|^{\frac{1}{2}} |\eta - \xi|^{\frac{1}{2}}}
\\
& \hspace{2em}  \frac{\tilde{F}_1(\sigma,\zeta)}{\langle \zeta\rangle^{\frac{1}{2}+\epsilon}} 
\frac{\tilde{F}_2(\sigma - \tau, \zeta - \xi)}{\langle E_+ \rangle^{\frac{1}{2}+} \langle \zeta - \xi \rangle^{\frac{1}{2}+\epsilon} |\zeta - \xi|^{\frac{1}{2}}}
  d\sigma d\zeta d\tau d\xi d\eta d\lambda \, .
\end{align*}
{\bf Case 4.1:} $|\xi| \ll |\eta - \xi| \sim |\eta|$.\\
We conclude
\begin{align*}
I & \lesssim \| ( \frac{\tilde{F}_3(\lambda,\eta)}{ |\eta|^{1-} \langle B_+ \rangle^{\frac{1}{2}+}} )\check{\,\,}\|_{L^{\infty}_t L^{\infty-}_x} \|F_0\|_{L^2_{xt}} \| (\frac{\tilde{F}_1(\sigma,\zeta )}{\langle \zeta \rangle^{\epsilon}})\check{\,\,}\|_{L^2_t L^{2+}_x} \\
 & \hspace{2em} \|( \frac{\tilde{F}_2(\sigma - \tau,\zeta -\xi)}{\langle E_+ \rangle^{\frac{1}{2}+} \langle \zeta - \xi \rangle^{\frac{1}{2}+\epsilon} |\zeta - \xi|^{\frac{1}{2}}} )\check{\,\,}\|_{L^{\infty}_{xt}} \\
 & \lesssim \prod_{i=0}^3 \|F_i\|_{L^2_{xt}} \, .
\end{align*}
{\bf Case 4.2:} $|\xi| \gtrsim |\eta|$ ($\Rightarrow$ $|\eta - \xi| \lesssim |\xi| \lesssim |\zeta|$).\\
Similarly as before we obtain
\begin{align*}
I & \lesssim \| ( \frac{\tilde{F}_3(\lambda,\eta)}{ |\eta|^{\frac{1}{2}-} \langle \eta \rangle^{\frac{1}{2}+\epsilon} \langle B_+ \rangle^{\frac{1}{2}+}} )\check{\,\,}\|_{L^{\infty}_{xt}} \|F_0\|_{L^2_{xt}} \|F_1\|_{L^2_{xt}}
 \\& \hspace{2em}
  \|( \frac{\tilde{F}_2(\sigma - \tau,\zeta -\xi)}{\langle E_+ \rangle^{\frac{1}{2}+} \langle \zeta - \xi \rangle^{\frac{1}{2}+\epsilon} |\zeta - \xi|^{\frac{1}{2}}} )\check{\,\,}\|_{L^{\infty}_{xt}} \\
 & \lesssim \prod_{i=0}^3 \|F_i\|_{L^2_{xt}} \, .
\end{align*}
{\bf Case 5:} $|C_+| \ge |A|,|B_+|$ and $|A| \ge |D_+|,|E_+|$.\\
In this case we estimate as follows:
\begin{align*}
I & \lesssim \int \frac{\tilde{F}_3(\lambda,\eta)}{ \langle \eta \rangle^{\frac{1}{2}+\epsilon} \langle B_+ \rangle^{\frac{1}{2}+}} 
\tilde{F}_0(\lambda-\tau,\eta - \xi) \langle \eta - \xi \rangle^{\frac{1}{2}+\epsilon}
\frac{|\xi|^{\frac{1}{2}} |\eta|^{0+}}{|\eta|^{\frac{1}{2}} |\eta - \xi|^{\frac{1}{2}}}
\\
& \hspace{2em}  \frac{\tilde{F}_1(\sigma,\zeta)}{\langle \zeta\rangle^{\frac{1}{2}+\epsilon} \langle D_+ \rangle^{\frac{1}{2}+}}
\frac{\tilde{F}_2(\sigma - \tau, \zeta - \xi)}{\langle E_+ \rangle^{\frac{1}{2}+} \langle \zeta - \xi \rangle^{\frac{1}{2}+\epsilon} |\zeta - \xi|^{\frac{1}{2}}}||\tau|-|\xi||^{\frac{1}{2}}
  d\sigma d\zeta d\tau d\xi d\eta d\lambda \, .
\end{align*}
Estimating $\langle \eta - \xi \rangle^{\epsilon} \lesssim \langle \xi \rangle^{\epsilon} + \langle \eta \rangle^{\epsilon}$ we consider two subcases.\\
{\bf Case 5.1:} $|\xi| \ge |\eta|$ ($\Rightarrow$ $ \langle \eta - \xi \rangle^{\epsilon} \lesssim \langle \zeta \rangle^{\epsilon}$).\\
Thus we obtain
\begin{align*}
I & \lesssim \| ( \frac{\tilde{F}_3(\lambda,\eta)}{ |\eta|^{\frac{1}{2}-} \langle \eta \rangle^{\frac{1}{2}+\epsilon} \langle B_+ \rangle^{\frac{1}{2}+}} )\check{\,\,}\|_{L^{\infty}_{xt}} \|F_0\|_{L^2_{xt}} \\
 & \hspace{2em} \|(\frac{\tilde{F}_1(\sigma,\zeta )}{\langle D_+ \rangle^{\frac{1}{2}+}} \frac{\tilde{F}_2(\sigma - \tau,\zeta -\xi)}{\langle E_+ \rangle^{\frac{1}{2}+} \langle \zeta - \xi \rangle^{\frac{1}{2}+\epsilon} |\zeta - \xi|^{\frac{1}{2}}} ||\tau|-|\xi||^{\frac{1}{2}} )\check{\,\,}\|_{L^2_{xt}} \\
 & \lesssim \prod_{i=0}^3 \|F_i\|_{L^2_{xt}} \, ,
\end{align*}
applying Cor. \ref{Cor.1.1} with $\beta_0=0$ , $\beta_- = \frac{1}{2}$ , $\alpha_1 =0$ , $\alpha_2=1$. \\
{\bf Case 5.2:} $|\eta| \ge |\xi|$. \\
In this case we arrive at
\begin{align*}
I & \lesssim \| ( \frac{\tilde{F}_3(\lambda,\eta)}{ |\eta|^{\frac{1}{2}-} \langle \eta \rangle^{\frac{1}{2}} \langle B_+ \rangle^{\frac{1}{2}+}} )\check{\,\,}\|_{L^{\infty}_t L^{\infty -}_x} \|F_0\|_{L^2_{xt}} \\
 & \hspace{2em} \|(\frac{\tilde{F}_1(\sigma,\zeta )}{\langle \zeta \rangle^{\epsilon} \langle D_+ \rangle^{\frac{1}{2}+}} \frac{\tilde{F}_2(\sigma - \tau,\zeta -\xi)}{\langle E_+ \rangle^{\frac{1}{2}+} \langle \zeta - \xi \rangle^{\frac{1}{2}+\epsilon} |\zeta - \xi|^{\frac{1}{2}}} ||\tau|-|\xi||^{\frac{1}{2}} )\check{\,\,}\|_{L^2_t L^{2+}_x} \\
 & \lesssim \prod_{i=0}^3 \|F_i\|_{L^2_{xt}} \, ,
\end{align*}
where we used the Sobolev embedding $\dot{H}^{0+}_x  \subset L^{2+}_x$ for the last factor and then Cor. \ref{Cor.1.1} with $\beta_0=0+$ , $\beta_- = \frac{1}{2}$ , $\alpha_1 =0+$ , $\alpha_2=1$. \\
{\bf Case 6:} $|A| \ge |C_+|,|B_+|$ and $|A| \ge |D_+|,|E_+|$. \\
{\bf Case 6.1:} $|C_+| \le |\eta|$. \\
We estimate
\begin{align*}
I & \lesssim \int \frac{\tilde{F}_3(\lambda,\eta)}{ \langle \eta \rangle^{\frac{1}{2}+\epsilon} \langle B_+ \rangle^{\frac{1}{2}+}} 
\frac{\tilde{F}_0(\lambda-\tau,\eta - \xi) \langle \eta - \xi \rangle^{\frac{1}{2}+\epsilon}}{\langle C_+ \rangle^{\frac{1}{2}-}}
\frac{|\xi|^{\frac{1}{2}}}{|\eta|^{\frac{1}{2}} |\eta - \xi|^{\frac{1}{2}}} ||\tau|-|\xi||^{\frac{1}{2}}
\\
& \hspace{2em}  \frac{\tilde{F}_1(\sigma,\zeta)}{\langle \zeta \rangle^{\frac{1}{2}+\epsilon} \langle D_+ \rangle^{\frac{1}{2}+}}
\frac{\tilde{F}_2(\sigma - \tau, \zeta - \xi)}{\langle E_+ \rangle^{\frac{1}{2}+} \langle \zeta - \xi \rangle^{\frac{1}{2}+\epsilon}} \frac{|\xi|^{\frac{1}{2}}}{|\zeta|^{\frac{1}{2}} |\zeta - \xi|^{\frac{1}{2}}}||\tau|-|\xi||^{\frac{1}{2}}
  d\sigma d\zeta d\tau d\xi d\eta d\lambda \\
& \lesssim \int \frac{\tilde{F}_3(\lambda,\eta)}{ \langle \eta \rangle^{\frac{1}{2}+\epsilon} |\eta|^{\frac{1}{2}-} \langle B_+ \rangle^{\frac{1}{2}+}} 
\frac{\tilde{F}_0(\lambda-\tau,\eta - \xi) \langle \eta - \xi \rangle^{\epsilon} |\xi|^{\frac{1}{2}}}{\langle C_+ \rangle^{\frac{1}{2}+}}
 ||\tau|-|\xi||^{\frac{1}{2}}
\\
& \hspace{2em}  \frac{\tilde{F}_1(\sigma,\zeta)}{\langle \zeta \rangle^{\frac{1}{2}+\epsilon} \langle D_+ \rangle^{\frac{1}{2}+}}
\frac{\tilde{F}_2(\sigma - \tau, \zeta - \xi)}{\langle E_+ \rangle^{\frac{1}{2}+} \langle \zeta - \xi \rangle^{\frac{1}{2}+\epsilon} |\zeta - \xi|^{\frac{1}{2}}}||\tau|-|\xi||^{\frac{1}{2}}
  d\sigma d\zeta d\tau d\xi d\eta d\lambda \, .
\end{align*}
{\bf Case 6.1.1:} $|\eta| \ge |\xi|$. \\
We obtain
\begin{align*}
I & \lesssim \int \left(\frac{\tilde{F}_3(\lambda,\eta)}{ \langle \eta \rangle^{\frac{1}{2}} |\eta|^{\frac{1}{2}-} \langle B_+ \rangle^{\frac{1}{2}+}} 
\frac{\tilde{F}_0(\lambda-\tau,\eta - \xi)}{\langle C_+ \rangle^{\frac{1}{2}+}}
 ||\tau|-|\xi||^{\frac{1}{2}} |\xi|^{0-} \right)
\\
& \hspace{2em}  \left(\frac{\tilde{F}_1(\sigma,\zeta)}{\langle \zeta \rangle^{\epsilon -} \langle D_+ \rangle^{\frac{1}{2}+}}
\frac{\tilde{F}_2(\sigma - \tau, \zeta - \xi)}{\langle E_+ \rangle^{\frac{1}{2}+} \langle \zeta - \xi \rangle^{\frac{1}{2}+\epsilon} |\zeta - \xi|^{\frac{1}{2}}}||\tau|-|\xi||^{\frac{1}{2}} \right)
  d\sigma d\zeta d\tau d\xi d\eta d\lambda \, .
\end{align*}
We estimate both factors in $L^2_{xt}$ using Cor. \ref{Cor.1.1} with $\beta_0 = 0-$ , $\beta_- = \frac{1}{2}$ , $\alpha_1 = 1-,$  $ \alpha_2 =0$ and $\beta_0= 0$ , $ \beta_- = \frac{1}{2}$ , $\alpha_1 =0$ , $\alpha_2 =1$ , respectively. \\
{\bf Case 6.1.2:} $|\eta| \le |\xi|$.\\
We obtain
\begin{align*}
I & \lesssim \int \left(\frac{\tilde{F}_3(\lambda,\eta)}{ \langle \eta \rangle^{\frac{1}{2}+\epsilon} |\eta|^{\frac{1}{2}-} \langle B_+ \rangle^{\frac{1}{2}+}} 
\frac{\tilde{F}_0(\lambda-\tau,\eta - \xi)}{\langle C_+ \rangle^{\frac{1}{2}+}}
 ||\tau|-|\xi||^{\frac{1}{2}} \right)
\\
& \hspace{2em}  \left(\frac{\tilde{F}_1(\sigma,\zeta)}{ \langle D_+ \rangle^{\frac{1}{2}+}}
\frac{\tilde{F}_2(\sigma - \tau, \zeta - \xi)}{\langle E_+ \rangle^{\frac{1}{2}+} \langle \zeta - \xi \rangle^{\frac{1}{2}+\epsilon} |\zeta - \xi|^{\frac{1}{2}}}||\tau|-|\xi||^{\frac{1}{2}} \right)
  d\sigma d\zeta d\tau d\xi d\eta d\lambda \, .
\end{align*}
As in case 6.1.1 we estimate both factors in $L^2_{xt}$ using Cor. \ref{Cor.1.1} with $\beta_0 = 0$ , $\beta_- = \frac{1}{2}$ , $\alpha_1 = 1,$  $ \alpha_2 =0$ and $\beta_0= 0$ , $ \beta_- = \frac{1}{2}$ , $\alpha_1 =0$ , $\alpha_2 =1$ , respectively. \\
{\bf Case 6.2:} $ |C_+| \ge |\eta|$.\\
In this case we have
\begin{align*}
I & \lesssim \int \frac{\tilde{F}_3(\lambda,\eta)}{ \langle \eta \rangle^{\frac{1}{2}+\epsilon} \langle B_+ \rangle^{\frac{1}{2}+}} 
\frac{\tilde{F}_0(\lambda-\tau,\eta - \xi) \langle \eta - \xi \rangle^{\frac{1}{2}+\epsilon}}{|\eta|^{\frac{1}{2}-}}
\\
& \hspace{2em}  \frac{\tilde{F}_1(\sigma,\zeta)}{\langle \zeta \rangle^{\frac{1}{2} +\epsilon} \langle D_+ \rangle^{\frac{1}{2}+}}
\frac{\tilde{F}_2(\sigma - \tau, \zeta - \xi)}{\langle E_+ \rangle^{\frac{1}{2}+} \langle \zeta - \xi \rangle^{\frac{1}{2}+\epsilon} |\zeta - \xi|^{\frac{1}{2}}}||\tau|-|\xi||^{\frac{1}{2}} 
  d\sigma d\zeta d\tau d\xi d\eta d\lambda \, .
\end{align*}
{\bf Case 6.2.1:} $|\eta| \ge |\xi|$.\\
We arrive at the bound
\begin{align*}
I & \lesssim \| ( \frac{\tilde{F}_3(\lambda,\eta)}{ |\eta|^{\frac{1}{2}-} \langle B_+ \rangle^{\frac{1}{2}+}} )\check{\,\,}\|_{L^{\infty}_t L^{4-}_x} \|F_0\|_{L^2_{xt}} \\
 & \hspace{2em} \|(\frac{\tilde{F}_1(\sigma,\zeta )}{\langle \zeta \rangle^{\frac{1}{2}+\epsilon} \langle D_+ \rangle^{\frac{1}{2}+}} \frac{\tilde{F}_2(\sigma - \tau,\zeta -\xi)}{\langle E_+ \rangle^{\frac{1}{2}+} \langle \zeta - \xi \rangle^{\frac{1}{2}+\epsilon} |\zeta - \xi|^{\frac{1}{2}}} ||\tau|-|\xi||^{\frac{1}{2}} )\check{\,\,}\|_{L^2_t L^{4+}_x} \, .
\end{align*}
By Sobolev the first factor is estimated by $\|F_3\|_{L^2_{xt}}$, and the last factor by Sobolev`s embedding $\dot{H}^{\frac{1}{2}+}_x \subset L^{4+}_x$ followed by an application of Cor. \ref{Cor.1.1} with $\beta_0 = \frac{1}{2}+$ , $\beta_- = \frac{1}{2}$ , $\alpha_1 = \frac{1}{2}+$ , $\alpha_2 = 1$ , which gives the desired bound. \\
{\bf Case 6.2.2:} $ |\eta| \le |\xi| $. \\
We end up with the bound
\begin{align*}
I & \lesssim \| ( \frac{\tilde{F}_3(\lambda,\eta)}{\langle \eta \rangle^{\frac{1}{2}+\epsilon} |\eta|^{\frac{1}{2}-} \langle B_+ \rangle^{\frac{1}{2}+}} )\check{\,\,}\|_{L^{\infty}_t L^{\infty}_x} \|F_0\|_{L^2_{xt}} \\
 & \hspace{2em} \|(\frac{\tilde{F}_1(\sigma,\zeta )}{ \langle D_+ \rangle^{\frac{1}{2}+}} \frac{\tilde{F}_2(\sigma - \tau,\zeta -\xi)}{\langle E_+ \rangle^{\frac{1}{2}+} \langle \zeta - \xi \rangle^{\frac{1}{2}+\epsilon} |\zeta - \xi|^{\frac{1}{2}}} ||\tau|-|\xi||^{\frac{1}{2}} )\check{\,\,}\|_{L^2_{xt}} \, .
\end{align*} 
Cor. \ref{Cor.1.1} with $\beta_0 =0$ , $\beta_- = \frac{1}{2}$ , $\alpha_1 =0$ , $\alpha_1 = 1$ implies the desired bound.

This completes the proof of Part I, where all the signs are $+$ -signs. \\
{\bf Part II:} Next we consider the case $\pm_3 = +$ , $ \pm = -$ and $\pm_1 = +$ , $\pm_2 = -$. In the same way all the cases can be treated where $\pm$ and $\pm_3$ as well as $\pm_1$
and $\pm_2$ have different signs. 

We use the following estimates which immediately follow from Lemma \ref{Lemma2.2}:
\begin{align}
\label{48}
\Theta_{-,+} & \lesssim \frac{(|\eta|+|\eta-\xi|)^{\frac{1}{2}}}{|\eta|^{\frac{1}{2}} |\eta - \xi|^{\frac{1}{2}}} (|A|^{\frac{1}{2}} + |B_+|^{\frac{1}{2}} + \min(|\eta|,|\eta - \xi|)^{0+} |C_-|^{\frac{1}{2}-} ) \\ \label{49}
\overline{\Theta}_{+,-} & \lesssim \frac{(|\zeta|+|\zeta - \xi|)^{\frac{1}{2}}}{|\zeta|^{\frac{1}{2}} |\zeta - \xi|^{\frac{1}{2}}} (|A|^{\frac{1}{2}} + |D_+|^{\frac{1}{2}} + |E_-|^{\frac{1}{2}})
\end{align}
We first make the important remark that we may assume in all the cases where one has different signs that concerning $\Theta_{-,+}$:
\begin{equation}
\label{50}
 |\xi| \ll |\eta| \sim |\eta - \xi| 
\end{equation}
and similarly concerning  $\overline{\Theta}_{+,-}$:
\begin{equation}
\label{51}
|\xi| \ll |\zeta| \sim |\zeta - \xi| \, .
\end{equation}
If one namely has $|\eta| \ll |\eta - \xi|$, then $|\xi| \sim |\eta-\xi|$, and thus
the factor $\frac{(|\eta|+|\eta-\xi|)^{\frac{1}{2}}}{|\eta|^{\frac{1}{2}} |\eta - \xi|^{\frac{1}{2}}}$ is equivalent to $\frac{|\xi|^{\frac{1}{2}}}{|\eta|^{\frac{1}{2}} |\eta - \xi|^{\frac{1}{2}}}$. If $|\eta| \gg |\eta-\xi|$, then $|\xi| \sim |\eta|$, and the same is true, and also in the case $|\xi| \sim |\eta| \sim |\eta - \xi|$. Thus in all these cases we have the same estimate for $\Theta_{-,+}$ as for $\Theta_{+,+}$ , especially the estimates (\ref{45}) and (\ref{46}) with $C_+$ replaced by $C_-$, so the same arguments in this case hold true, if  (\ref{50}) is violated. The same arguments work for $\overline{\Theta}_{+,-}$, especially (\ref{45'}) with $E_+$ replaced by $E_-$ holds true, if (\ref{51}) is violated. This means that we can apply the arguments of Part I of this proof in all these cases. So for Part II we may assume (\ref{50}) and (\ref{51}).\\
{\bf Case 1:} $|B_+| \ge |A|,|C_-|$ and $|D_+| \ge |A|,|E_-|$. \\
{\bf Case 1.1:} $ |C_-| \le |\eta-\xi| \sim |\eta|$. \\
We obtain
\begin{align} \nonumber
I & \lesssim \int \frac{\tilde{F}_3(\lambda,\eta)}{ \langle \eta \rangle^{\frac{1}{2}+\epsilon}} 
\frac{\tilde{F}_0(\lambda-\tau,\eta - \xi)}{\langle C_- \rangle^{\frac{1}{2}+}}
 \langle \eta - \xi \rangle^{\frac{1}{2}+\epsilon} \frac{|\eta|^{0+}}{|\eta|^{\frac{1}{2}}} 
\\ \nonumber
& \hspace{2em} \frac{\tilde{F}_1(\sigma,\zeta)}{\langle \zeta \rangle^{\frac{1}{2} +\epsilon}}
\frac{\tilde{F}_2(\sigma - \tau, \zeta - \xi)}{\langle E_- \rangle^{\frac{1}{2}+} \langle \zeta - \xi \rangle^{\frac{1}{2}+\epsilon} |\zeta - \xi|^{\frac{1}{2}}} 
  d\sigma d\zeta d\tau d\xi d\eta d\lambda \\ \label{II.1.1}
& \lesssim \|  ( \frac{\tilde{F}_3(\lambda,\eta)}{ |\eta|^{\frac{1}{2}-}} )\check{\,\,}\|_{L^2_t L^{4-}_x} ( \frac{\tilde{F}_0(\lambda - \tau,\eta - \xi)}{\langle C_- \rangle^{\frac{1}{2}+}} )\check{\,\,}\|_{L^{\infty}_t L^2_x} \\ \nonumber
 & \hspace{2em} \|(\frac{\tilde{F}_1(\sigma,\zeta )}{\langle \zeta \rangle^{\frac{1}{2}+\epsilon}})\check{\,\,}\|_{L^2_t L^{4+}_x} \|(\frac{\tilde{F}_2(\sigma - \tau,\zeta -\xi)}{\langle E_- \rangle^{\frac{1}{2}+} \langle \zeta - \xi \rangle^{\frac{1}{2}+\epsilon} |\zeta - \xi|^{\frac{1}{2}}})\check{\,\,}\|_{L^{\infty}_t L^{\infty}_x} \\ \nonumber
 &\lesssim \prod_{i=0} ^3 \|F_i\|_{L^2_{xt}} \, .
\end{align}
{\bf Case 1.2:} $|C_-| \ge |\eta-\xi| \sim |\eta|$. \\
In this case we obtain the same bound as in Part I, Case 3.2.1 with $E_+$ replaced by $E_-$. \\
{\bf Case 2:} $|C_-| \ge |A|,|B_+|$ and $|D_+| \ge |A|,|E_-|$. \\
Using (\ref{48}) we obtain the same estimate as in case 1.2. \\
{\bf Case 3:} $|A| \ge |B_+|,|C_-|$. \\
{\bf Case 3.1:} $|C_-| \le |\xi|$. \\
We obtain
\begin{align*}
I & \lesssim \int \frac{\tilde{F}_3(\lambda,\eta)}{|\eta|^{\frac{1}{2}} \langle B_+ \rangle^{\frac{1}{2}+} \langle \eta \rangle^{\frac{1}{2}+\epsilon}} 
\frac{\tilde{F}_0(\lambda-\tau,\eta - \xi) \langle \eta - \xi \rangle^{\frac{1}{2}+\epsilon}}{\langle C_- \rangle^{\frac{1}{2}-}}
||\tau|-|\xi||^{\frac{1}{2}} 
\\
& \hspace{2em}
 \frac{\tilde{F}_1(\sigma,\zeta)}{\langle \zeta \rangle^{\frac{1}{2} +\epsilon} \langle D_+ \rangle^{\frac{1}{2}+}}
\frac{\tilde{F}_2(\sigma - \tau, \zeta - \xi)}{\langle E_- \rangle^{\frac{1}{2}+} \langle \zeta - \xi \rangle^{\frac{1}{2}+\epsilon}} 
  d\sigma d\zeta d\tau d\xi d\eta d\lambda \\
& \lesssim \int \left(\frac{\tilde{F}_3(\lambda,\eta)}{|\eta|^{\frac{1}{2}} \langle B_+ \rangle^{\frac{1}{2}+}} 
\frac{\tilde{F}_0(\lambda-\tau,\eta - \xi)}{\langle C_- \rangle^{\frac{1}{2}+}}
||\tau|-|\xi||^{\frac{1}{4}} |\xi|^{-\frac{1}{4}} \right)
\\
& \hspace{2em}\left( |\xi|^{\frac{1}{4}+}||\tau|-|\xi||^{\frac{1}{4}} \frac{\tilde{F}_1(\sigma,\zeta)}{\langle \zeta \rangle^{\frac{1}{2} +\epsilon} \langle D_+ \rangle^{\frac{1}{2}+}}
\frac{\tilde{F}_2(\sigma - \tau, \zeta - \xi)}{\langle E_- \rangle^{\frac{1}{2}+} \langle \zeta - \xi \rangle^{\frac{1}{2}+\epsilon}} \right)
  d\sigma d\zeta d\tau d\xi d\eta d\lambda \, .
\end{align*}
We take both factors in the $L^2_{xt}$-norm. We remark that in the first factor the interaction is of type $(+,+)$ because of the conjugation in its second factor $F_0$ (remark that $|C_-| = |\lambda - \tau -|\eta - \xi|| = |\tau-\lambda+|\xi-\eta||$). This means that we can apply Cor. \ref{Cor.1.2} with
$\beta_0 = -\frac{1}{4}$ , $\alpha_1 = \frac{1}{2}$ , $\alpha_2 =0$. For the second factor we apply Cor. \ref{Cor.1.1} with $\beta_0 = \frac{1}{4}+$ , $\beta_- = \frac{1}{4}$ , $\alpha_1 = \frac{1}{2}+$ , $\alpha_2 =\frac{1}{2}$. Thus we get the bound $ \prod_{i=0} ^3 \|F_i\|_{L^2_{xt}} \, .$ \\
{\bf Case 3.2:} $|C_-| \ge |\xi|$. \\
We obtain
\begin{align*}
I 
& \lesssim \|  ( \frac{\tilde{F}_3(\lambda,\eta)}{ \langle B_+ \rangle^{\frac{1}{2}+}} )\check{\,\,}\|_{L^{\infty}_t L^2_x}  \|(\frac{\tilde{F}_0(\lambda - \tau,\eta - \xi)}{|\eta - \xi|^{\frac{1}{2}}})\check{\,\,}\|_{L^2_t L^4_x} \\
 & \hspace{2em} \|(||\tau|-|\xi||^{\frac{1}{2}}|\xi|^{-\frac{1}{2}+}\frac{\tilde{F}_1(\sigma,\zeta )}{\langle \zeta \rangle^{\frac{1}{2}+\epsilon}\langle D_+ \rangle	^{\frac{1}{2}+}} \frac{\tilde{F}_2(\sigma - \tau,\zeta -\xi)}{\langle E_- \rangle^{\frac{1}{2}+} \langle \zeta - \xi \rangle^{\frac{1}{2}+\epsilon}})\check{\,\,}\|_{L^2_t L^4_x} \\
 &\lesssim \prod_{i=0} ^3 \|F_i\|_{L^2_{xt}} \, .
\end{align*}
In the last step we first used Sobolev's embedding $\dot{H}^{\frac{1}{2}}_x \subset L^4_x$ and then Cor. \ref{Cor.1.1} with $\beta_0 =0+$ , $\beta_- = \frac{1}{2}$ , $\alpha_1 = \frac{1}{2}$ , $\alpha_2 = \frac{1}{2}+$ for the last factor. \\
{\bf Case 4:} $|B_+| \ge |A|,|C_-|$ and $|A| \ge |D_+|,|E_-|$. \\
{\bf Case 4.1:} $|C_-| \le |\xi|$.\\
In this case we obtain
\begin{align*}
I 
& \lesssim \|  ( \frac{\tilde{F}_3(\lambda,\eta)}{|\eta|^{\frac{1}{2}}} )\check{\,\,}\|_{L^2_t L^4_x}  \|(\frac{\tilde{F}_0(\lambda - \tau,\eta - \xi)}{\langle C_- \rangle^{\frac{1}{2}+}})\check{\,\,}\|_{L^{\infty}_t L^2_x} \\
 & \hspace{2em} \|(||\tau|-|\xi||^{\frac{1}{2}}|\xi|^{0+}\frac{\tilde{F}_1(\sigma,\zeta )}{\langle \zeta \rangle^{\frac{1}{2}+\epsilon}\langle D_+ \rangle	^{\frac{1}{2}+}} \frac{\tilde{F}_2(\sigma - \tau,\zeta -\xi)}{\langle E_- \rangle^{\frac{1}{2}+} \langle \zeta - \xi \rangle^{\frac{1}{2}+\epsilon}|\zeta - \xi|^{\frac{1}{2}}})\check{\,\,}\|_{L^2_t L^4_x} \\
 &\lesssim \prod_{i=0} ^3 \|F_i\|_{L^2_{xt}} \, .
\end{align*}
In the last step we first used Sobolev's embedding $\dot{H}^{\frac{1}{2}}_x \subset L^4_x$ and then Cor. \ref{Cor.1.1} with $\beta_0 =\frac{1}{2}+$ , $\beta_- = \frac{1}{2}$ , $\alpha_1 = \frac{1}{2}+$ , $\alpha_2 = 1$ for the last factor. \\
{\bf Case 4.2:} $ |C_-| \ge |\xi|$.\\
We obtain
\begin{align*}
I 
& \lesssim \|  ( \frac{\tilde{F}_3(\lambda,\eta)}{\langle B_+ \rangle^{\frac{1}{2}+}} )\check{\,\,}\|_{L^{\infty}_t L^2_x}  \|F_0\|_{L^2_t L^2_x} \\
 & \hspace{2em} \|(||\tau|-|\xi||^{\frac{1}{2}}|\xi|^{-\frac{1}{2}+}\frac{\tilde{F}_1(\sigma,\zeta )}{\langle \zeta \rangle^{\frac{1}{2}+\epsilon}\langle D_+ \rangle	^{\frac{1}{2}+}} \frac{\tilde{F}_2(\sigma - \tau,\zeta -\xi)}{\langle E_- \rangle^{\frac{1}{2}+} \langle \zeta - \xi \rangle^{\frac{1}{2}+\epsilon}|\zeta - \xi|^{\frac{1}{2}}})\check{\,\,}\|_{L^2_t L^{\infty}_x} \\
 &\lesssim \prod_{i=0} ^3 \|F_i\|_{L^2_{xt}} \, .
\end{align*}
In the last step the last factor is estimated by Sobolev's embedding $\|f\|_{L^{\infty}_x} \lesssim \|(|\xi|^{1-} + |\xi|^{1+})\widehat{f}(\xi)\|_{L^2}$ and then Cor. \ref{Cor.1.1} with $\beta_0 =\frac{1}{2}+$ , $\beta_- = \frac{1}{2}$ , $\alpha_1 = \frac{1}{2}+$ , $\alpha_2 = 1$. \\
{\bf Case 5:} $|C_-| \ge |A|,|B_+|$ and $|A| \ge |D_+|,|E_-|$. \\
In this case we estimate as follows
\begin{align} \nonumber
I & \lesssim \int \frac{\tilde{F}_3(\lambda,\eta)}{ \langle B_+ \rangle^{\frac{1}{2}+} \langle \eta \rangle^{\frac{1}{2}+\epsilon}} 
\tilde{F}_0(\lambda-\tau,\eta - \xi) \langle \eta - \xi \rangle^{\frac{1}{2}+\epsilon} |\eta|^{0+}\frac{(|\eta|+|\eta - \xi|)^{\frac{1}{2}}}{|\eta|^{\frac{1}{2}} |\eta - \xi|^{\frac{1}{2}}}
\\   \nonumber
& \hspace{2em}  
 \frac{\tilde{F}_1(\sigma ,\zeta)}{\langle \zeta \rangle^{\frac{1}{2} +\epsilon} \langle D_+ \rangle^{\frac{1}{2}+}}
\frac{\tilde{F}_2(\sigma - \tau, \zeta - \xi)}{\langle E_- \rangle^{\frac{1}{2}+} \langle \zeta - \xi \rangle^{\frac{1}{2}+\epsilon} |\zeta - \xi|^{\frac{1}{2}}} ||\tau|-|\xi||^{\frac{1}{2}} 
  d\sigma d\zeta d\tau d\xi d\eta d\lambda \\ \label{II.5}
& \lesssim \|  ( \frac{\tilde{F}_3(\lambda,\eta)}{|\eta|^{\frac{1}{2}-} \langle B_+ \rangle^{\frac{1}{2}+}} )\check{\,\,}\|_{L^{\infty}_t L^{4-}_x}  \|F_0\|_{L^2_t L^2_x} \\ \nonumber
 & \hspace{2em} \|  (\frac{\tilde{F}_1(\sigma ,\zeta)(\sigma ,\zeta)(\sigma,\zeta)}{\langle \zeta \rangle^{\frac{1}{2} +\epsilon} \langle D_+ \rangle^{\frac{1}{2}+}}
\frac{\tilde{F}_2(\sigma - \tau, \zeta - \xi)}{\langle E_- \rangle^{\frac{1}{2}+} \langle \zeta - \xi \rangle^{\frac{1}{2}+\epsilon} |\zeta - \xi|^{\frac{1}{2}}} ||\tau|-|\xi||^{\frac{1}{2}})   \check{\,\,}\|_{L^2_t L^{4+}_x} \\ \nonumber
 &\lesssim \prod_{i=0} ^3 \|F_i\|_{L^2_{xt}} \, .  
\end{align}
In the last step the last factor is estimated by the embedding $\dot{H}^{\frac{1}{2}+}_x \subset L^{4+}_x$ and Cor. \ref{Cor.1.1} with $\beta_0 = \frac{1}{2}+$ , $\beta_- = \frac{1}{2}$ , $\alpha_1 = \frac{1}{2}+$ , $\alpha_2 = 1$, which completes Part II. \\
{\bf Part III.} Now we consider the case $\pm_3 = +$ , $\pm = -$ and $\pm_1=+$ , $\pm_2=+$. 

 An analogous proof works for $\pm_3 = -$ , $\pm = +$ and/or $\pm_1=-$ , $\pm_2=-$. 
 
 Again we only have to consider the case $|\xi| \ll |\eta| \sim |\eta - \xi|$. As above we also assume $|\zeta| \ge |\zeta - \xi|$ so that $|\xi| \lesssim |\zeta|$ , because the case $|\zeta| \le |\zeta - \xi|$ can be treated similarly. \\
 {\bf Case 1:} $|B_+| \ge |A|,|C_-|$ and $|D_+| \ge |A|,|E_+|$. \\
 {\bf Case 1.1:} $ |C_-| \le |\eta|$.\\
 We obtain in this case
\begin{align*}
I & \lesssim \int \frac{\tilde{F}_3(\lambda,\eta)}{ \langle \eta \rangle^{\frac{1}{2}+\epsilon} |\eta|^{\frac{1}{2}-}}
\frac{\tilde{F}_0(\lambda-\tau,\eta - \xi) \langle \eta - \xi \rangle^{\frac{1}{2}+\epsilon}}{\langle C_- \rangle^{\frac{1}{2}+}}
\\ 
& \hspace{2em}
 \frac{\tilde{F}_1(\sigma ,\zeta)}{\langle \zeta \rangle^{\frac{1}{2} +\epsilon}}
\frac{\tilde{F}_2(\sigma - \tau, \zeta - \xi)}{\langle E_+ \rangle^{\frac{1}{2}+} \langle \zeta - \xi \rangle^{\frac{1}{2}+\epsilon}} \frac{|\xi|^{\frac{1}{2}}}{|\zeta|^{\frac{1}{2}} |\zeta - \xi|^{\frac{1}{2}}} 
  d\sigma d\zeta d\tau d\xi d\eta d\lambda   
\end{align*} 
This can be bounded as in Part II, Case 1.1 by (\ref{II.1.1}) with $E_-$ replaced by $E_+$.\\
{\bf Case 1.2:} $|C_-| \ge |\eta|$. 
This case can be treated as Part I, Case 3.2.1.\\
{\bf Case 2:} $|A| \ge |B_+|,|C_-|$. 
This case is handled like Part II, Case 3. \\
{\bf Case 3:} $|C_-| \ge |A|,|B_+|$ and $|D_+| \ge |A|,|E_+|$.
We obtain the same estimate as in Case 1.2. \\
{\bf Case 4:} $ |C_-| \ge |A|,|B_+|$ and $|A| \ge |D_+|,|E_+|$.
In this case we arrive at (\ref{II.5}) as in Part II, Case 5 with $E_-$ replaced by $E_+$. \\
{\bf Case 5:} $|B_+| \ge |A|,|C_-|$ and $|A| \ge |D_+|,|E_+|$. \\
{\bf Case 5.1:} $|C_-| \le |\eta|$. \\
We obtain
\begin{align*}
I & \lesssim \|  ( \frac{\tilde{F}_3(\lambda,\eta)}{ |\eta|^{\frac{1}{2}-}} )\check{\,\,}\|_{L^2_t L^{4-}_x}
\|( \frac{F_0(\lambda - \tau,\eta - \xi)}{\langle C_- \rangle^{\frac{1}{2}+}})\check{\,\,}\|_{L^{\infty}_t L^2_x} \\
 & \hspace{2em} \|  (\frac{\tilde{F}_1(\sigma ,\zeta)}{\langle \zeta \rangle^{\frac{1}{2} +\epsilon}\langle D_+ \rangle^{\frac{1}{2}+}}\frac{\tilde{F}_2(\sigma - \tau, \zeta - \xi)}{\langle E_+ \rangle^{\frac{1}{2}+} \langle \zeta - \xi \rangle^{\frac{1}{2}+\epsilon} |\zeta - \xi|^{\frac{1}{2}}} ||\tau|-|\xi||^{\frac{1}{2}})  \check{\,\,}\|_{L^2_t L^{4+}_x} \, . 
\end{align*}
The last factor is estimated by the embedding $\dot{H}^{\frac{1}{2}+}_x \subset L^{4+}_x$ and Cor. \ref{Cor.1.1} with $\beta_0 = \frac{1}{2}+$ , $\beta_- = \frac{1}{2}$ , $\alpha_1 = \frac{1}{2}+$ , $\alpha_2 = 1$,
so that the desired estimate follows. \\
{\bf Case 5.2:} $|C_-| \ge |\eta|$. This case can be treated exactly like Case 4, so that Part III is complete. \\
{\bf Part IV.} Finally we consider the signs $\pm=+$ , $\pm_3 =+$ and $\pm_1 =+$ , $\pm_2 = -$. 

In the same way one can also treat the cases $\pm =-$ , $\pm_3 = -$ and/or $\pm_1 = -,$ $\pm_3 = +$.

We may assume as discussed above that $|\xi| \ll |\zeta| \sim |\zeta - \xi|$. \\
{\bf Case 1:} $|B_+| \ge |A|,|C_+|$ and $|D_+| \ge |A|,|E_-|$. \\
{\bf Case 1.1:} $|C_+| \le |\eta|$. \\
 We obtain in this case
\begin{align*}
I & \lesssim \int \frac{\tilde{F}_3(\lambda,\eta)}{ \langle \eta \rangle^{\frac{1}{2}+\epsilon}}
\frac{\tilde{F}_0(\lambda-\tau,\eta - \xi) \langle \eta - \xi \rangle^{\frac{1}{2}+\epsilon}}{\langle C_+ \rangle^{\frac{1}{2}+}} \frac{|\xi|^{\frac{1}{2}} |\eta|^{0+}}{|\eta|^{\frac{1}{2}} |\eta - \xi|^{\frac{1}{2}}}
\\ 
& \hspace{2em}
 \frac{\tilde{F}_1(\sigma ,\zeta)}{\langle \zeta \rangle^{\frac{1}{2} +\epsilon}}
\frac{\tilde{F}_2(\sigma - \tau, \zeta - \xi)}{\langle E_+ \rangle^{\frac{1}{2}+} \langle \zeta - \xi \rangle^{\frac{1}{2}+\epsilon}|\zeta - \xi|^{\frac{1}{2}}}
  d\sigma d\zeta d\tau d\xi d\eta d\lambda \, .
\end{align*}
Estimating $ \langle \eta - \xi \rangle \lesssim \langle \eta \rangle + \langle \xi \rangle$ we distinguish two subcases.\\
{\bf Case 1.1.1:} $|\eta| \le |\xi|$ $\Rightarrow$ $\langle \eta-\xi \rangle \lesssim \langle \xi \rangle \lesssim \langle \zeta \rangle + \langle \zeta - \xi \rangle \sim \langle \zeta \rangle$. \\
We obtain
\begin{align*}
I & \lesssim \|  ( \frac{\tilde{F}_3(\lambda,\eta)}{\langle \eta \rangle^{\frac{1}{2}+\epsilon} |\eta|^{\frac{1}{2}-}} )\check{\,\,}\|_{L^2_t L^{\infty}_x}
\|( \frac{\tilde{F}_0(\sigma,\zeta)}{\langle C_+ \rangle^{\frac{1}{2}+}})\check{\,\,}\|_{L^{\infty}_t L^2_x} \\
 & \hspace{2em} \|F_1\|_{L^2_t L^2_x} \|  ( \frac{\tilde{F}_2(\sigma - \tau, \zeta - \xi)}{\langle E_- \rangle^{\frac{1}{2}+} \langle \zeta - \xi \rangle^{\frac{1}{2}+\epsilon} |\zeta - \xi|^{\frac{1}{2}}})  \check{\,\,}\|_{L^{\infty}_t L^{\infty}_x} \, , 
\end{align*}
which gives the desired bound. \\
{\bf Case 1.1.2:} $ |\eta| \ge |\xi|$ $\Rightarrow$ $\langle \eta - \xi \rangle \lesssim \langle \eta \rangle$ , $|\xi| \le |\zeta| + |\zeta - \xi| \sim |\zeta|$. \\
We obtain
\begin{align*}
I & \lesssim \|  ( \frac{\tilde{F}_3(\lambda,\eta)}{\langle \eta \rangle^{\frac{1}{2}} |\eta|^{\frac{1}{2}-}} )\check{\,\,}\|_{L^2_t L^{\infty -}_x}
\|( \frac{F_0(\sigma,\zeta)}{\langle C_+ \rangle^{\frac{1}{2}+}})\check{\,\,}\|_{L^{\infty}_t L^2_x} \\
 & \hspace{2em} \|(\frac{\tilde{F}_1(\sigma ,\zeta)}{\langle \zeta \rangle^{\epsilon}})\check{\,\,} \|_{L^2_t L^{2+}_x} \|  ( \frac{\tilde{F}_2(\sigma - \tau, \zeta - \xi)}{\langle E_- \rangle^{\frac{1}{2}+} \langle \zeta - \xi \rangle^{\frac{1}{2}+\epsilon} |\zeta - \xi|^{\frac{1}{2}}})  \check{\,\,}\|_{L^{\infty}_t L^{\infty}_x} \, , 
\end{align*}
which leads to the desired bound. \\
{\bf Case 1.2:} $|C_+| \ge |\eta|$. \\
We obtain in this case the same bounds as in Part I, Case 3.2 with $E_+$ replaced by $E_-$.\\
{\bf Case 2:} $|A| \ge |B_+|,|C_+|$ and $|A| \ge |D_+|,|E_-|$. \\
{\bf Case 2.1:} $|C_+| \le |\eta|$. \\
We obtain in this case
\begin{align*}
I & \lesssim \int \frac{\tilde{F}_3(\lambda,\eta)}{ \langle \eta \rangle^{\frac{1}{2}+\epsilon} \langle B_+ \rangle^{\frac{1}{2}+}}
\frac{\tilde{F}_0(\lambda-\tau,\eta - \xi) \langle \eta - \xi \rangle^{\frac{1}{2}+\epsilon}}{\langle C_+ \rangle^{\frac{1}{2}+}} \frac{|\eta|^{0+} |\xi|^{\frac{1}{2}}}{|\eta|^{\frac{1}{2}} |\eta - \xi|^{\frac{1}{2}}} ||\tau|-|\xi||^{\frac{1}{2}}
\\ 
& \hspace{2em}
 \frac{\tilde{F}_1(\sigma ,\zeta)}{\langle \zeta \rangle^{\frac{1}{2} +\epsilon} \langle D_+ \rangle^{\frac{1}{2}+}}
\frac{\tilde{F}_2(\sigma - \tau, \zeta - \xi)}{\langle E_- \rangle^{\frac{1}{2}+} \langle \zeta - \xi \rangle^{\frac{1}{2}+\epsilon}|\zeta - \xi|^{\frac{1}{2}}} ||\tau|-|\xi||^{\frac{1}{2}}
  d\sigma d\zeta d\tau d\xi d\eta d\lambda \, .
\end{align*}
{\bf Case 2.1.1:} $|\xi| \ge |\eta|$. We handle this case as Part I, Case 6.1.2 with $E_+$ replaced by $E_-$.\\
{\bf Case 2.1.2:} $|\eta| \ge |\xi|$ $\Rightarrow$ $\langle \eta - \xi \rangle^{\epsilon} \lesssim \langle \eta \rangle^{\epsilon}$. \\
Similarly as before
\begin{align*}
I & \lesssim \| ( \frac{\tilde{F}_3(\lambda,\eta)}{|\eta|^{\frac{1}{2}-} \langle \eta \rangle^{\frac{1}{2}} \langle B_+ \rangle^{\frac{1}{2}+}}
\frac{\tilde{F}_0(\lambda-\tau,\eta - \xi)}{\langle C_+ \rangle^{\frac{1}{2}+}} ||\tau|-|\xi||^{\frac{1}{2}-} )\check{\,\,}\|_{L^2_t L^2_x}
\\ 
& \hspace{2em}
\| ( \frac{\tilde{F}_1(\sigma ,\zeta)}{\langle \zeta \rangle^{\epsilon} \langle D_+ \rangle^{\frac{1}{2}+}}
\frac{\tilde{F}_2(\sigma - \tau, \zeta - \xi)}{\langle E_- \rangle^{\frac{1}{2}+} \langle \zeta - \xi \rangle^{\frac{1}{2}+\epsilon}|\zeta - \xi|^{\frac{1}{2}}} ||\tau|-|\xi||^{\frac{1}{2}+} )\check{\,\,}\|_{L^2_t L^2_x} \, .
\end{align*}
Cor. \ref{Cor.1.1} with $\beta_0 = 0$ , $\beta_- = \frac{1}{2}-$ , $\alpha_1=1-$ , $\alpha_2=0$ and $\beta_0 = 0$ , $\beta_- = \frac{1}{2}+$ , $\alpha_1=0+$ , $\alpha_2=1$ for the first and second factor, respectively, gives the required estimate. \\
{\bf Case 2.2:} $|C_+| \ge |\eta|$. \\
We obtain 
\begin{align*}
I & \lesssim \int \frac{\tilde{F}_3(\lambda,\eta)}{ \langle \eta \rangle^{\frac{1}{2}+\epsilon} |\eta|^{\frac{1}{2}-} \langle B_+ \rangle^{\frac{1}{2}+}}
\tilde{F}_0(\lambda-\tau,\eta - \xi) \langle \eta - \xi \rangle^{\frac{1}{2}+\epsilon}
\\ 
& \hspace{2em}
 \frac{\tilde{F}_1(\sigma ,\zeta)}{\langle \zeta \rangle^{\frac{1}{2} +\epsilon} \langle D_+ \rangle^{\frac{1}{2}+}}
\frac{\tilde{F}_2(\sigma - \tau, \zeta - \xi)}{\langle E_- \rangle^{\frac{1}{2}+} \langle \zeta - \xi \rangle^{\frac{1}{2}+\epsilon}|\zeta - \xi|^{\frac{1}{2}}} ||\tau|-|\xi||^{\frac{1}{2}}
  d\sigma d\zeta d\tau d\xi d\eta d\lambda \, .
\end{align*}
{\bf Case 2.2.1:} $|\eta| \ge |\xi|$. This can be treated exactly as Part II, Case 5.\\
{\bf Case 2.2.2:} $|\xi| \ge |\eta|$. We handle this case as Part I, Case 5.1 with $E_+$ replaced by $E_-$.\\
{\bf Case 3:} $|B_+| \ge |A|,|C_+|$ and $|A| \ge |D_+|,|E_-|$.\\
{\bf Case 3.1:} $ |C_+| \le |\eta|$.\\
We obtain in this case
\begin{align*}
I & \lesssim \int \frac{\tilde{F}_3(\lambda,\eta)}{ \langle \eta \rangle^{\frac{1}{2}+\epsilon -}}
\frac{\tilde{F}_0(\lambda-\tau,\eta - \xi) \langle \eta - \xi \rangle^{\frac{1}{2}+\epsilon}}{\langle C_+ \rangle^{\frac{1}{2}+}} \frac{ |\xi|^{\frac{1}{2}}}{|\eta|^{\frac{1}{2}} |\eta - \xi|^{\frac{1}{2}}}
\\ 
& \hspace{2em}
 \frac{\tilde{F}_1(\sigma ,\zeta)}{|\zeta|^{\frac{1}{2}} \langle \zeta \rangle^{\frac{1}{2} +\epsilon} \langle D_+ \rangle^{\frac{1}{2}+}}
\frac{\tilde{F}_2(\sigma - \tau, \zeta - \xi)}{\langle E_- \rangle^{\frac{1}{2}+} \langle \zeta - \xi \rangle^{\frac{1}{2}+\epsilon}} ||\tau|-|\xi||^{\frac{1}{2}}
  d\sigma d\zeta d\tau d\xi d\eta d\lambda \\
& \lesssim \int \frac{\tilde{F}_3(\lambda,\eta)}{ \langle \eta \rangle^{\frac{1}{2}+\epsilon -}|\eta|^{\frac{1}{2}}}
\frac{\tilde{F}_0(\lambda-\tau,\eta - \xi) \langle \eta - \xi \rangle^{\epsilon}}{\langle C_+ \rangle^{\frac{1}{2}+}}
\\ 
& \hspace{2em}
 \frac{\tilde{F}_1(\sigma ,\zeta)}{\langle \zeta \rangle^{\frac{1}{2} +\epsilon} \langle D_+ \rangle^{\frac{1}{2}+}}
\frac{\tilde{F}_2(\sigma - \tau, \zeta - \xi)}{\langle E_- \rangle^{\frac{1}{2}+} \langle \zeta - \xi \rangle^{\frac{1}{2}+\epsilon}} ||\tau|-|\xi||^{\frac{1}{2}}
  d\sigma d\zeta d\tau d\xi d\eta d\lambda \, .
\end{align*}
{\bf Case 3.1.1:} $ |\xi| \ge |\eta|$ $\Rightarrow$ $\langle \eta - \xi \rangle^{\epsilon} \lesssim \langle \xi \rangle^{\epsilon} \lesssim \langle \zeta \rangle^{\epsilon}$. \\
We obtain
\begin{align*}
I & \lesssim \| ( \frac{\tilde{F}_3(\lambda,\eta)}{|\eta|^{\frac{1}{2}} \langle \eta \rangle^{\frac{1}{2}+\epsilon -}} )\check{\,\,}\|_{L^2_t L^{\infty}_x}
\| (\frac{\tilde{F}_0(\lambda-\tau,\eta - \xi)}{\langle C_+ \rangle^{\frac{1}{2}+}} )\check{\,\,}\|_{L^{\infty}_t L^2_x}
\\ 
& \hspace{2em}
\| ( \frac{\tilde{F}_1(\sigma ,\zeta)}{\langle \zeta \rangle^{\frac{1}{2}} \langle D_+ \rangle^{\frac{1}{2}+}}
\frac{\tilde{F}_2(\sigma - \tau, \zeta - \xi)}{\langle E_- \rangle^{\frac{1}{2}+} \langle \zeta - \xi \rangle^{\frac{1}{2}+\epsilon}} ||\tau|-|\xi||^{\frac{1}{2}} )\check{\,\,}\|_{L^2_t L^2_x} \, .
\end{align*}
The claim follow by an application of Cor. \ref{Cor.1.1} with $\beta_0 =0$ , $\beta_- = \frac{1}{2}$ , $\alpha_1 = \frac{1}{2}$ , $\alpha_2 = \frac{1}{2}$. \\
{\bf Case 3.1.2:} $ |\eta| \ge |\xi|$ $\Rightarrow$ $\langle \eta - \xi \rangle^{\epsilon} \sim |\eta-\xi|^{\epsilon} \lesssim |\eta|^{\epsilon}$. \\
We arrive at
\begin{align*}
I & \lesssim \| ( \frac{\tilde{F}_3(\lambda,\eta)}{|\eta|^{\frac{1}{2}-} \langle \eta \rangle^{\frac{1}{2}}} )\check{\,\,}\|_{L^2_t L^{\infty -}_x}
\| (\frac{\tilde{F}_0(\lambda-\tau,\eta - \xi)}{\langle C_+ \rangle^{\frac{1}{2}+}} )\check{\,\,}\|_{L^{\infty}_t L^2_x}
\\ 
& \hspace{2em}
\| ( \frac{\tilde{F}_1(\sigma ,\zeta)}{\langle \zeta \rangle^{\frac{1}{2}+\epsilon} \langle D_+ \rangle^{\frac{1}{2}+}}
\frac{\tilde{F}_2(\sigma - \tau, \zeta - \xi)}{\langle E_- \rangle^{\frac{1}{2}+} \langle \zeta - \xi \rangle^{\frac{1}{2}+\epsilon}} ||\tau|-|\xi||^{\frac{1}{2}} )\check{\,\,}\|_{L^2_t L^{2+}_x} \, .
\end{align*}
In the last factor we use the embedding $\dot{H}^{0+}_x \subset L^{2+}_x$ and then Cor. \ref{Cor.1.1} with $\beta_0 = 0+$ , $\beta_- = \frac{1}{2}$ , $\alpha_1 = \frac{1}{2}+$ , $\alpha_2 = \frac{1}{2}$.\\
{\bf Case 3.2:} $|C_+| \ge |\eta|$. \\
We obtain
\begin{align*}
I & \lesssim \int \frac{\tilde{F}_3(\lambda,\eta)}{ \langle \eta \rangle^{\frac{1}{2}+\epsilon} \langle B_+ \rangle^{\frac{1}{2}+}}
\frac{\tilde{F}_0(\lambda-\tau,\eta - \xi) \langle \eta - \xi \rangle^{\frac{1}{2}+\epsilon}}{|\eta|^{\frac{1}{2}-}}
\\ 
& \hspace{2em}
 \frac{\tilde{F}_1(\sigma ,\zeta)}{ \langle \zeta \rangle^{\frac{1}{2} +\epsilon} \langle D_+ \rangle^{\frac{1}{2}+}}
\frac{\tilde{F}_2(\sigma - \tau, \zeta - \xi)}{\langle E_- \rangle^{\frac{1}{2}+} \langle \zeta - \xi \rangle^{\frac{1}{2}+\epsilon} |\zeta - \xi|^{\frac{1}{2}}} ||\tau|-|\xi||^{\frac{1}{2}}
  d\sigma d\zeta d\tau d\xi d\eta d\lambda \, .
\end{align*}
{\bf Case 3.2.1:} $|\xi| \ge |\eta|$.\\
We obtain
\begin{align*}
I & \lesssim \| ( \frac{\tilde{F}_3(\lambda,\eta)}{|\eta|^{\frac{1}{2}-} \langle \eta \rangle^{\frac{1}{2}+\epsilon} \langle B_+ \rangle^{\frac{1}{2}+}} )\check{\,\,}\|_{L^{\infty}_t L^{\infty}_x}
\|F_0\|_{L^2_t L^2_x}
\\ 
& \hspace{2em}
\| ( \frac{\tilde{F}_1(\sigma ,\zeta)}{ \langle D_+ \rangle^{\frac{1}{2}+}}
\frac{\tilde{F}_2(\sigma - \tau, \zeta - \xi)}{\langle E_- \rangle^{\frac{1}{2}+} \langle \zeta - \xi \rangle^{\frac{1}{2}+\epsilon} |\zeta - \xi|^{\frac{1}{2}}} ||\tau|-|\xi||^{\frac{1}{2}} )\check{\,\,}\|_{L^2_t L^2_x} \, .
\end{align*}
In the last factor we use Cor. \ref{Cor.1.1} with $\beta_0 = 0$ , $\beta_- = \frac{1}{2}$ , $\alpha_1 = 0$ , $\alpha_2 = 1$.\\
{\bf Case 3.2.2:} $|\eta| \ge |\xi|$. This case is treated exactly as Part II, Case 5.\\
{\bf Case 4:} $|A| \ge |B_+|,|C_+|$ and $|D_+| \ge |A|,|E_-|$. \\
{\bf Case 4.1:} $|C_+| \le |\eta|$.\\
We obtain
\begin{align*}
I & \lesssim \int \frac{\tilde{F}_3(\lambda,\eta)}{ \langle \eta \rangle^{\frac{1}{2}+\epsilon} \langle B_+ \rangle^{\frac{1}{2}+}}
\frac{\tilde{F}_0(\lambda-\tau,\eta - \xi) \langle \eta - \xi \rangle^{\frac{1}{2}+\epsilon}|\xi|^{\frac{1}{2}}}{\langle C_+ \rangle^{\frac{1}{2}+} |\eta - \xi|^{\frac{1}{2}}|\eta|^{\frac{1}{2}-}} ||\tau|-|\xi||^{\frac{1}{2}}
\\ 
& \hspace{2em}
 \frac{\tilde{F}_1(\sigma ,\zeta)}{ \langle \zeta \rangle^{\frac{1}{2} +\epsilon}}
\frac{\tilde{F}_2(\sigma - \tau, \zeta - \xi)}{\langle E_- \rangle^{\frac{1}{2}+} \langle \zeta - \xi \rangle^{\frac{1}{2}+\epsilon} |\zeta - \xi|^{\frac{1}{2}}}
  d\sigma d\zeta d\tau d\xi d\eta d\lambda \, .
\end{align*}
{\bf Case 4.1.1:} $|\xi| \ge |\eta|$ $\Rightarrow$ $\langle \eta - \xi \rangle \lesssim \langle \xi \rangle \lesssim \langle \zeta \rangle$ , $|\xi| \lesssim |\zeta|$. We obtain the same estimate as in Part I, Case 3.1.2 with $E_+$ replaced by $E_-$.\\
{\bf Case 4.1.2:} $ |\eta| \ge |\xi|$. \\
We recall our tacid assumption $|\zeta| \ge |\zeta -\xi|$ so that $|\xi| \lesssim |\zeta|$, and thus obtain
\begin{align*}
I & \lesssim \| ( \frac{\tilde{F}_3(\lambda,\eta)}{|\eta|^{\frac{1}{2}-} \langle \eta \rangle^{\frac{1}{2}} \langle B_+ \rangle^{\frac{1}{2}+}} \frac{\tilde{F}_0(\lambda - \tau,\eta - \xi)}{\langle C_+ \rangle^{\frac{1}{2}+}} ||\tau|-|\xi||^{\frac{1}{2}} |\xi|^{-\epsilon})\check{\,\,}\|_{L^2_t L^2_x}
\\ 
& \hspace{2em}
\|F_1\|_{L^2_{xt}}
\| (\frac{\tilde{F}_2(\sigma - \tau, \zeta - \xi)}{\langle E_- \rangle^{\frac{1}{2}+} \langle \zeta - \xi \rangle^{\frac{1}{2}+\epsilon} |\zeta - \xi|^{\frac{1}{2}}} )\check{\,\,}\|_{L^{\infty}_t L^{\infty}_x} \, .
\end{align*}
An application of Cor. \ref{Cor.1.1} with $\beta_0 = -\epsilon$ , $\beta_- = \frac{1}{2}$ , $\alpha_1 = 1-\epsilon$ , $\alpha_2 = 0$ gives the desired bound. \\
{\bf Case 4.2:} $ |C_+| \ge |\eta|$. \\
We obtain the same bounds as in Part I, Case 3.2 with $E_+$ replaced by $E_-$. \\
{\bf Case 5:}  $|C_+| \ge |A|,|B_+|$ and $|D_+| \ge |A|,|E_-|$. \\
We obtain
\begin{align*}
I & \lesssim \int \frac{\tilde{F}_3(\lambda,\eta)}{ \langle \eta \rangle^{\frac{1}{2}+\epsilon} \langle B_+ \rangle^{\frac{1}{2}+}}
\tilde{F}_0(\lambda-\tau,\eta - \xi) \langle \eta - \xi \rangle^{\frac{1}{2}+\epsilon} \frac{|\xi|^{\frac{1}{2}} |\eta|^{0+}}{|\eta|^{\frac{1}{2}} |\eta - \xi|^{\frac{1}{2}}}
\\ 
& \hspace{2em}
 \frac{\tilde{F}_1(\sigma ,\zeta)}{ \langle \zeta \rangle^{\frac{1}{2} +\epsilon}}
\frac{\tilde{F}_2(\sigma - \tau, \zeta - \xi)}{\langle E_- \rangle^{\frac{1}{2}+} \langle \zeta - \xi \rangle^{\frac{1}{2}+\epsilon} |\zeta - \xi|^{\frac{1}{2}}}
  d\sigma d\zeta d\tau d\xi d\eta d\lambda \, .
\end{align*}
{\bf Case 5.1:} $|\xi| \ge |\eta|$ $\Rightarrow$ $\langle \eta - \xi \rangle^{\epsilon} |\xi|^{\frac{1}{2}} \lesssim \langle \xi \rangle^{\frac{1}{2}+\epsilon} \lesssim \langle \zeta \rangle^{\frac{1}{2}+\epsilon}$. \\
This implies the same bound as in Part I, Case 4.2. \\
{\bf Case 5.2:} $|\eta| \ge |\xi|$.\\
We obtain the estimate
\begin{align*}
I & \lesssim \|  ( \frac{\tilde{F}_3(\lambda,\eta)}{\langle \eta \rangle^{\frac{1}{2}} |\eta|^{\frac{1}{2}-} \langle B_+ \rangle^{\frac{1}{2}+}} )\check{\,\,}\|_{L^{\infty}_t L^{\infty -}_x}
\|F_0\|_{L^2_t L^2_x} \\
 & \hspace{2em} \|( \frac{\tilde{F}_1(\sigma ,\zeta)}{\langle \zeta \rangle^{\epsilon}} )\check{\,\,} \|_{L^2_t L^{2+}_x} \|  ( \frac{\tilde{F}_2(\sigma - \tau, \zeta - \xi)}{\langle E_- \rangle^{\frac{1}{2}+} \langle \zeta - \xi \rangle^{\frac{1}{2}+\epsilon} |\zeta - \xi|^{\frac{1}{2}}})  \check{\,\,}\|_{L^{\infty}_t L^{\infty}_x} \, , 
\end{align*}
which implies the desired bound. \\
{\bf Case 6:} $|C_+| \ge |A|,|B_+|$ and $|A| \ge |D_+|,|E_-|$. \\
In this case we obtain
\begin{align*}
I & \lesssim \int \frac{\tilde{F}_3(\lambda,\eta)}{ \langle \eta \rangle^{\frac{1}{2}+\epsilon} \langle B_+ \rangle^{\frac{1}{2}+}}
\tilde{F}_0(\lambda-\tau,\eta - \xi) \langle \eta - \xi \rangle^{\frac{1}{2}+\epsilon} \frac{|\xi|^{\frac{1}{2}} |\eta|^{0+}}{|\eta|^{\frac{1}{2}} |\eta - \xi|^{\frac{1}{2}}}
\\ 
& \hspace{2em}
 \frac{\tilde{F}_1(\sigma ,\zeta)}{ \langle \zeta \rangle^{\frac{1}{2} +\epsilon} \langle D_+ \rangle^{\frac{1}{2}+}}
\frac{\tilde{F}_2(\sigma - \tau, \zeta - \xi)}{\langle E_- \rangle^{\frac{1}{2}+} \langle \zeta - \xi \rangle^{\frac{1}{2}+\epsilon} |\zeta - \xi|^{\frac{1}{2}}} ||\tau|-|\xi||^{\frac{1}{2}}
  d\sigma d\zeta d\tau d\xi d\eta d\lambda \, .
\end{align*}
{\bf Case 6.1:} $ |\xi| \ge |\eta|$. \\
We obtain the estimate
\begin{align*}
I & \lesssim \|  ( \frac{\tilde{F}_3(\lambda,\eta)}{\langle \eta  \rangle^{\frac{1}{2}+\epsilon} |\eta|^{\frac{1}{2}-} \langle B_+ \rangle^{\frac{1}{2}+}} )\check{\,\,}\|_{L^{\infty}_t L^{\infty}_x}
\|F_0\|_{L^2_t L^2_x} \\
 & \hspace{2em} \|( \frac{\tilde{F}_1(\sigma ,\zeta)}{\langle D_+ \rangle^{\frac{1}{2}+}} \frac{\tilde{F}_2(\sigma - \tau, \zeta - \xi)}{\langle E_- \rangle^{\frac{1}{2}+} \langle \zeta - \xi \rangle^{\frac{1}{2}+\epsilon} |\zeta - \xi|^{\frac{1}{2}}} ||\tau|-|\xi||^{\frac{1}{2}})  \check{\,\,}\|_{L^2_t L^2_x} \, , 
\end{align*}
which is further estimated by use of Cor. \ref{Cor.1.1} with $\beta_0 = 0$ , $\beta_- = \frac{1}{2}$ , $\alpha_1 = 0$ , $\alpha_2 = 1$. \\
{\bf Case 6.2:} $|\eta| \ge |\xi|$.
We obtain in this case the same estimate as in Part I, Case 5.2 with $E_+$ replaced by $E_-$, so that the proof is now complete.\\
\end{proof}
\begin{proof}[{\bf Proof of Theorem \ref{Theorem1.2}}]
Let $\psi_{\pm} \in C^0([0,T],H^s({\mathbb R}^2))$ be given, where $s= \frac{3}{4}+\epsilon$, $\epsilon > 0$ arbitrarily small, $T \le 1$. Then $\psi_{\pm} \in X_{\pm}^{s,0}[0,T] = L^2([0,T],H^s)$. By Prop. \ref{Prop.1.4} we obtain (with $\psi = \psi_+ + \psi_-$):
\begin{equation}
\label{60}
 \|\psi_{\pm}\|_{X_{\pm}^{\frac{1}{4}+2\epsilon,1}[0,T]} \lesssim\|\psi_{\pm}(0)\|_{H^s} + \|\Pi_{\pm} (\langle \beta \psi,\psi \rangle \psi)\|_{X_{\pm}^{\frac{1}{4}+2\epsilon,0}[0,T]} \, .
 \end{equation}
 By the generalized H\"older inequality
 $$ \|\langle \beta \psi,\psi \rangle \psi\|_{H^{\frac{1}{4}+2\epsilon}_x} \lesssim \|\psi\|^2_{L^{2p}_x} \| \psi \|_{H_x^{\frac{1}{4}+2\epsilon , q}} $$
 with $ \frac{1}{p} =\frac{1}{4}-\frac{\epsilon}{2} $ and $\frac{1}{q} = \frac{1}{4} + \frac{\epsilon}{2}$. Sobolev's embedding gives $H^{\frac{3}{4}+\epsilon}_x \subset L^{2p}_x$, because $\frac{1}{2p} \ge \frac{1}{2}-\frac{\frac{3}{4}+\epsilon}{2} = \frac{1}{8} - \frac{\epsilon}{2}$, and $H^{\frac{3}{4}+\epsilon}_x \subset H^{\frac{1}{4}+2\epsilon,q}_x$. Consequently
 \begin{align*}
\|\Pi_{\pm} (\langle \beta \psi,\psi \rangle \psi\|_{X_{\pm}^{\frac{1}{4}+2\epsilon,0}[0,T]} =  \|\Pi_{\pm} (\langle \beta \psi,\psi \rangle \psi\|_{L^2([0,T],H_x^{\frac{1}{4}+2\epsilon})} &\lesssim \|\psi\|_{L^{\infty}([0,T],H_x^{\frac{3}{4}+\epsilon})}^3 \\
&< \infty \, . 
\end{align*}
By (\ref{60}) this implies $\psi_{\pm} \in X_{\pm}^{\frac{1}{4}+2\epsilon,1}[0,T]$. Interpolation with $\psi_{\pm} \in X_{\pm}^{\frac{3}{4}+\epsilon,0}[0,T]$ gives (for interpolation parameter $\Theta = \frac{1}{2}+$: $\psi_{\pm} \in X_{\pm}^{\frac{1}{2} + \frac{3}{2}\epsilon-,\frac{1}{2}+}[0,T]$. In this class, however, uniqueness holds by Theorem \ref{Theorem1.1}, which shows that our solution is (unconditionally) unique in $C^0([0,T],H^s)$ for any $s > \frac{3}{4}$.
\end{proof}

\end{document}